\newtheorem{lemma}{Lemma}
\newtheorem{theorem}[lemma]{Theorem}
\newtheorem*{theorem*}{Theorem}
\newtheorem*{corollary*}{Corollary}
\newtheorem{proposition}[lemma]{Proposition}
\newtheorem{corollary}[lemma]{Corollary}
\newtheorem{claim}{Claim}
\newtheorem*{main}{Main Theorem}
\theoremstyle{definition}
\newtheorem{definition}[lemma]{Definition}
\newtheorem{example}[lemma]{Example}
\newtheorem{remark}[lemma]{Remark}
\newcommand{\C}{\mathcal{C}}
\DeclareMathOperator{\sd}{sd}
\newcommand{\U}{\mathcal{U}}
\newcommand{\In}{\mathrm{In}}
\newcommand{\lk}{\mathrm{lk}}
\newcommand{\ord}{\mathrm{ord}}
\newcommand{\D}{\mathcal{D}}
\title{Homological stability for families of Coxeter groups}
\author{Richard Hepworth}
\address{Institute of Mathematics\\
University of Aberdeen\\
Aberdeen AB24 3UE\\
United Kingdom}
\email{r.hepworth@abdn.ac.uk}
\subjclass[2010]{20F55, 20J06}
\keywords{Homological stability, Coxeter groups}
\begin{document}

\begin{abstract}
We prove that certain families of Coxeter groups and inclusions
$W_1\hookrightarrow W_2\hookrightarrow\cdots$
satisfy homological stability, meaning that 
in each degree the homology $H_\ast(BW_n)$
is eventually independent of $n$.
This gives a uniform treatment of homological stability
for the families of Coxeter groups of type $A_n$, $B_n$ and $D_n$,
recovering existing results in the first two cases, and giving
a new result in the third.
The key step in our proof is to show that a certain
simplicial complex with $W_n$-action is highly connected.
To do this we show that the barycentric subdivision
is an instance of the `basic construction',
and then use Davis's description of the 
basic construction as an increasing union of chambers
to deduce the required connectivity. 
\end{abstract}

\maketitle

\section{Introduction}

A family of groups 
$G_1\hookrightarrow G_2\hookrightarrow G_3\hookrightarrow\cdots$
is said to satisfy \emph{homological stability} if the induced
maps $H_i(BG_{n-1})\to H_i(BG_n)$ are isomorphisms
when $n$ is sufficiently large relative to $i$.  Homological stability
is known for many families of groups, including symmetric
groups~\cite{Nakaoka},
general linear groups~\cite{Quillen},
mapping class groups of surfaces~\cite{Harer} 
and $3$-manifolds~\cite{HatcherWahl},
diffeomorphism groups of highly connected 
manifolds~\cite{GalatiusRandalWilliams},
and automorphism groups of free groups~\cite{HatcherAutFn},
\cite{HatcherVogtmann}.
\emph{Coxeter groups} are abstract reflection groups, appearing
in many areas of mathematics, such as root systems and
Lie theory, geometric group theory, 
and combinatorics.
See~\cite{Bourbaki}, \cite{Davis}, 
and \cite{BjornerBrenti} for introductions to Coxeter groups
from each of these three viewpoints.
In this paper we will show that homological 
stability holds for certain families of Coxeter groups.

Recall that a \emph{Coxeter matrix} on a set $S$ 
is an $S\times S$ symmetric
matrix $M$, with values in $\mathbb{N}\cup\{\infty\}$,
satisfying $m_{st}=1$
if $s=t$ and $m_{st}\geqslant 2$ otherwise.  The corresponding
\emph{Coxeter group} is the group generated by the elements of
$S$, subject to the relations $(st)^{m_{st}}=e$ for $s,t\in S$.
(When $m_{st}=\infty$ no relation is imposed.)
It is common to represent a Coxeter matrix
by the equivalent \emph{Coxeter diagram}.
This is the graph with vertices $S$ and edges $\{s,t\}$
for $m_{st}\geqslant 3$. The edge $\{s,t\}$ is labelled
$m_{st}$ if $m_{st}\geqslant 4$.

Now consider a sequence of finite Coxeter diagrams 
$(\Gamma_n)_{n\geqslant 1}$ of the form
\[
\begin{tikzpicture}[scale=0.18, baseline=0]
	\draw[line width=1, fill = white!80!black, rounded corners=5 pt]
		(5,0) --  (1,4) -- (-3,0) -- (1,-4) -- (5,0);
	\draw[fill= black] (5,0) circle (0.5);
	\node at (1,-8) {$\Gamma_1$};
\end{tikzpicture}
\qquad\qquad
\begin{tikzpicture}[scale=0.18, baseline=0]
	\draw[line width=1, fill = white!80!black, rounded corners=5 pt]
		(5,0) --  (1,4) -- (-3,0) -- (1,-4) -- (5,0);
	\draw[fill= black] (5,0) circle (0.5);
	\draw[line width=1] (5,0) -- (10,0);
	\draw[fill= black] (10,0) circle (0.5);
	\node at (1,-8) {$\Gamma_2$};
\end{tikzpicture}
\qquad\qquad
\begin{tikzpicture}[scale=0.18, baseline=0]
	\draw[line width=1, fill = white!80!black, rounded corners=5 pt]
		(5,0) --  (1,4) -- (-3,0) -- (1,-4) -- (5,0);
	\draw[fill= black] (5,0) circle (0.5);
	\draw[line width=1] (5,0) -- (10,0);
	\draw[fill= black] (10,0) circle (0.5);
	\draw[line width=1] (5,0) -- (15,0);
	\draw[fill= black] (15,0) circle (0.5);
	\node at (1,-8) {$\Gamma_3$};
\end{tikzpicture}
\]
where every diagram has a preferred vertex,
and each diagram is obtained from its predecessor
by attaching a new preferred vertex 
to the old one by an unlabelled edge.
Writing $W_n$ for the Coxeter group determined by $\Gamma_n$,
the inclusion $\Gamma_{n-1}\hookrightarrow\Gamma_n$ 
induces an inclusion $W_{n-1} \hookrightarrow W_{n}$,
and our main result states that the family
\[
	W_1\hookrightarrow W_2\hookrightarrow W_3\hookrightarrow
	W_4 \hookrightarrow\cdots
\]
satisfies homological stability.

\begin{main}
The map
$
	H_\ast(BW_{n-1})\to H_\ast(BW_n)
$
is an isomorphism in degrees $2\ast\leqslant n$.
Here homology is taken with arbitrary constant
coefficients.
\end{main}

Observe that while the diagrams $\Gamma_n$ are assumed
to be finite, it is not necessary for the groups
$W_n$ to be finite.

\subsection*{Homological stability for Coxeter groups
of type $A_n$, $B_n$ and $D_n$}
The main theorem gives a uniform treatment of homological stability
for the families of Coxeter groups of type $A_n$, 
$B_n$ and $D_n$.
Recall that these are the Coxeter groups
corresponding to the following diagrams,
in which $n$ always denotes the total number of vertices.
\[
\begin{tikzpicture}[scale=0.18, baseline=-3]
	\draw[fill= black] (5,0) circle (0.5);
	\draw[fill= black] (10,0) circle (0.5);
	\draw[fill= black] (15,0) circle (0.5);
	\draw[fill= black] (20,0) circle (0.5);
	\draw[line width=1] (5,0)  -- (10,0);
	\draw[line width=1, dotted] (10,0) -- (15,0);
	\draw[line width=1] (15,0) -- (20,0);
	\node at (12.5,-4) {$A_n$};
\end{tikzpicture}
\qquad\qquad
\begin{tikzpicture}[scale=0.18, baseline=-3]
	\draw[fill= black] (0,0) circle (0.5);
	\draw[line width=1] (0,0) -- node[above] {$4$} (5,0);
	\draw[fill= black] (5,0) circle (0.5);
	\draw[fill= black] (10,0) circle (0.5);
	\draw[fill= black] (15,0) circle (0.5);
	\draw[fill= black] (20,0) circle (0.5);
	\draw[line width=1] (5,0) -- (10,0);
	\draw[line width=1, dotted] (10,0) -- (15,0);
	\draw[line width=1] (15,0) -- (20,0);
	\node at (10,-4) {$B_n$};
\end{tikzpicture}
\qquad\qquad
\begin{tikzpicture}[scale=0.18, baseline=-3]
	\draw[fill=black] (5,0)+(135:5) circle (0.5);
	\draw[line width=1] (5,0)+(135:5)-- (5,0);
	\draw[line width=1] (5,0)+(225:5) -- (5,0);
	\draw[fill=black] (5,0)+(225:5) circle (0.5);
	\draw[fill= black] (5,0) circle (0.5);
	\draw[fill= black] (10,0) circle (0.5);
	\draw[fill= black] (15,0) circle (0.5);
	\draw[fill= black] (20,0) circle (0.5);
	\draw[line width=1] (5,0) -- (10,0);
	\draw[line width=1, dotted] (10,0) -- (15,0);
	\draw[line width=1] (15,0) -- (20,0);
	\node at (11,-4) {$D_n$};
\end{tikzpicture}
\]
These families have an important place in the theory of Coxeter
groups, since the classification of finite Coxeter groups
states that a finite irreducible Coxeter group has type $A_n$, $B_n$
or $D_n$, or is dihedral, or is one of six exceptional examples.
(See Appendix~C of~\cite{Davis}.)
The sequences $(A_n)_{n\geqslant 1}$,
$(B_{n+1})_{n\geqslant 1}$ and $(D_{n+2})_{n\geqslant 1}$
all have the form $(\Gamma_n)_{n\geqslant 1}$ described above,
with the rightmost vertex taken as the preferred vertex, and 
therefore we may apply the main theorem to each one.

For the sequence of diagrams $(A_n)_{n\geqslant 1}$,
the corresponding sequence of Coxeter groups is
\[
	\Sigma_2\hookrightarrow
	\Sigma_3\hookrightarrow
	\Sigma_4\hookrightarrow
	\Sigma_5\hookrightarrow\cdots
\]
where $\Sigma_{n}$ is the symmetric group on $n$ letters
and the inclusions are given by extending permutations 
by the identity.
Applying the main theorem, we recover the following classical result.

\begin{corollary*}[Nakaoka~\cite{Nakaoka}]
The map $H_\ast(B\Sigma_n)\to H_\ast(B\Sigma_{n+1})$
is an isomorphism in degrees $2\ast\leqslant n$.
\end{corollary*}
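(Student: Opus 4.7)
The plan is to derive this corollary as a direct application of the main theorem to the sequence of diagrams of type $A_n$. The excerpt already records that $(A_n)_{n\geqslant 1}$ has the form $(\Gamma_n)_{n\geqslant 1}$, with the rightmost vertex chosen as the preferred vertex at each stage, so the hypotheses of the main theorem are met. What remains is simply to identify the Coxeter groups and their inclusions in concrete, symmetric-group terms.

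First I would recall the classical identification $W(A_n)\cong \Sigma_{n+1}$. Labelling the vertices of $A_n$ as $s_1,\ldots,s_n$ from left to right, the Coxeter relations $s_i^2=e$, $(s_is_{i+1})^3=e$, and $(s_is_j)^2=e$ for $|i-j|\geqslant 2$ are exactly the Moore relations satisfied by the adjacent transpositions $\tau_i=(i\ i{+}1)$ in $\Sigma_{n+1}$. Hence $s_i\mapsto\tau_i$ extends to an isomorphism of groups. Under this isomorphism the tower $W(A_1)\hookrightarrow W(A_2)\hookrightarrow\cdots$ becomes $\Sigma_2\hookrightarrow\Sigma_3\hookrightarrow\cdots$: the subdiagram $A_{n-1}\subset A_n$ on $s_1,\ldots,s_{n-1}$ corresponds to the subgroup generated by the transpositions $(1\ 2),\ldots,(n{-}1\ n)$, which is exactly the stabiliser of the letter $n+1$, i.e.\ the standard inclusion by extending permutations by the identity.

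Applying the main theorem to this sequence yields that $H_\ast(BW(A_{n-1}))\to H_\ast(BW(A_n))$ is an isomorphism in degrees $2\ast\leqslant n$, and translating through the identification above gives $H_\ast(B\Sigma_n)\to H_\ast(B\Sigma_{n+1})$ is an isomorphism in the same range, which is the statement of the corollary. There is essentially no obstacle beyond bookkeeping: the only point requiring attention is the index shift between $A_n$ and $\Sigma_{n+1}$, which is precisely why the stability range appears as $2\ast\leqslant n$ rather than $2\ast\leqslant n-1$ when phrased in terms of the symmetric groups.
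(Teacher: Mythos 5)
Your proposal is correct and is essentially identical to what the paper does: both identify $W(A_n)\cong\Sigma_{n+1}$ with the inclusions given by extending permutations by the identity, and then apply the Main Theorem, keeping track of the index shift that makes the range come out as $2\ast\leqslant n$. The only difference is that you spell out the Coxeter presentation and the matching of subgroups in more detail than the paper, which simply cites section~6.7 of Davis for the identification.
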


For the sequence of diagrams $(B_{n+1})_{n\geqslant 1}$,
the corresponding sequence of Coxeter groups 
\[
	C_2\wr\Sigma_2
	\hookrightarrow
	C_2\wr\Sigma_3
	\hookrightarrow
	C_2\wr\Sigma_4
	\hookrightarrow
	C_2\wr\Sigma_5
	\hookrightarrow
	\cdots	
\]
consists of the wreath products of the symmetric groups
with the group $C_2$ of order $2$,
and the inclusions are again given by extending permutations
by the identity.
Applying the main theorem gives the following 
special case of Hatcher and Wahl's result on homological
stability for wreath products.
(See Proposition~1.6 of~\cite{HatcherWahl}
and the discussion that follows it.)
It also follows from Randal-Williams's result on homological
stability for unordered configuration spaces.
(See Theorem~A of~\cite{RW} 
with $M=\mathbb{R}^\infty$ and $X=BC_2$.)

\begin{corollary*}[Hatcher-Wahl~\cite{HatcherWahl}]
The map $H_\ast(B(C_2\wr\Sigma_n))\to H_\ast(B(C_2\wr\Sigma_{n+1}))$
is an isomorphism in degrees $2\ast\leqslant n$.
\end{corollary*}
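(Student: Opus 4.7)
The plan is to deduce the corollary from the Main Theorem by recognising the sequence $(B_{n+1})_{n\geqslant 1}$ as an instance of the template $(\Gamma_n)_{n\geqslant 1}$ and identifying each $W(B_{n+1})$ with $C_2\wr\Sigma_{n+1}$.

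First I would verify that $(B_{n+1})_{n\geqslant 1}$ satisfies the hypotheses of the Main Theorem. Since the unique labelled edge of $B_{n+1}$ sits at the far left, attaching a new vertex to the rightmost vertex by an unlabelled edge produces exactly $B_{n+2}$. Declaring the rightmost vertex of each $B_{n+1}$ to be the preferred one, the Main Theorem then applies with $W_n = W(B_{n+1})$.

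Second I would identify $W(B_{n+1})$ with $C_2\wr\Sigma_{n+1}$. This is classical: labelling the simple reflections $s_0,s_1,\ldots,s_n$ with $m_{s_0 s_1}=4$, one sends $s_0$ to the sign change on the first coordinate of $\mathbb{R}^{n+1}$ and $s_i$ for $i\geqslant 1$ to the transposition of coordinates $i$ and $i+1$. A check of the defining Coxeter relations yields an isomorphism with the hyperoctahedral group $C_2^{n+1}\rtimes\Sigma_{n+1} = C_2\wr\Sigma_{n+1}$. Under this identification, the inclusion $W(B_{n+1})\hookrightarrow W(B_{n+2})$ induced by the inclusion of diagrams corresponds to the wreath product inclusion that extends a signed permutation by the identity on the new coordinate, matching the sequence of inclusions in the statement of the corollary.

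Finally, the Main Theorem applied to $W_n = C_2\wr\Sigma_{n+1}$ and $W_{n-1} = C_2\wr\Sigma_n$ gives that $H_\ast(B(C_2\wr\Sigma_n))\to H_\ast(B(C_2\wr\Sigma_{n+1}))$ is an isomorphism in degrees $2\ast\leqslant n$, which is precisely the corollary. There is no substantial obstacle here; the only care needed is the index shift $\Gamma_n = B_{n+1}$, reflecting that the abstract template starts with $\Gamma_1$ being a single vertex while the $B$-type sequence starts at $B_2$.
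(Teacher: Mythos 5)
Your proposal is correct and follows exactly the route the paper takes: recognise $(B_{n+1})_{n\geqslant 1}$ as an instance of the template $(\Gamma_n)_{n\geqslant 1}$ with the rightmost vertex preferred, identify $W(B_{n+1})$ with $C_2\wr\Sigma_{n+1}$ (for which the paper simply cites section~6.7 of Davis rather than writing out the signed-permutation isomorphism as you do), and apply the Main Theorem. The only difference is that you spell out the classical identification explicitly, which the paper treats as a citation; the substance is the same.
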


For the sequence of diagrams 
$(D_{n+2})_{n\geqslant 1}$, the corresponding sequence
of Coxeter groups is
\[
	H_3\hookrightarrow H_4
	\hookrightarrow H_5 \hookrightarrow H_6\hookrightarrow\cdots
\]
where $H_n$ denotes the kernel of the homomorphism
$C_2\wr\Sigma_n\to C_2$ that takes the sum of the $C_2$-components.
The main theorem gives the following result, which we believe to be new.

\begin{corollary*}
Let $H_n$ denote the Coxeter group of type $D_{n}$.
Then the inclusion $H_{n+1}\hookrightarrow H_{n+2}$
induces an isomorphism
$H_\ast(BH_{n+1})\to H_\ast(BH_{n+2})$ 
in degrees $2\ast\leqslant n$.
\end{corollary*}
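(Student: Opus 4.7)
The plan is to deduce this corollary directly from the Main Theorem, specialised to the sequence of diagrams $(D_{n+2})_{n\geqslant 1}$, as already signalled in the paper.

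First I would verify that the sequence $(D_{n+2})_{n\geqslant 1}$ fits the template $(\Gamma_n)_{n\geqslant 1}$ required by the Main Theorem. Set $\Gamma_n = D_{n+2}$ with preferred vertex the rightmost vertex of the horizontal chain. For $n=1$ this is the diagram $D_3$, which has only three vertices --- the two branching vertices and the single central vertex --- and we take the central vertex as the preferred vertex (since the horizontal chain has not yet begun to extend to its right). For $n \geqslant 2$, the diagram $D_{n+2}$ is obtained from $D_{n+1}$ by adjoining one new vertex to the previous preferred vertex by a single unlabelled edge, exactly as the template prescribes.

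Second I would identify the associated Coxeter groups. By definition the Coxeter group attached to $D_{n+2}$ is $H_{n+2}$, and the inclusion of diagrams $D_{n+1}\hookrightarrow D_{n+2}$ induces the standard inclusion $H_{n+1}\hookrightarrow H_{n+2}$. Substituting $W_n = H_{n+2}$ and $W_{n-1} = H_{n+1}$ into the conclusion of the Main Theorem yields immediately that $H_\ast(BH_{n+1}) \to H_\ast(BH_{n+2})$ is an isomorphism in degrees $2\ast\leqslant n$, which is precisely the claimed corollary.

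There is no real obstacle in the derivation of the corollary itself; all the substantive work resides in the proof of the Main Theorem, in particular the high-connectivity of the simplicial complex studied via the basic construction, as advertised in the abstract. The one minor matter worth checking is that the description of $H_n$ given just before the corollary --- as the kernel of the summing homomorphism $C_2\wr\Sigma_n \to C_2$ --- coincides with its definition as the Coxeter group of type $D_n$. This is the classical identification of the type $D$ Coxeter group with the signed permutations of $\{1,\ldots,n\}$ having an even number of sign changes, obtained by realising the Coxeter generators as the transpositions $(i\ i{+}1)$ together with the ``signed transposition'' that swaps and negates the first two coordinates.
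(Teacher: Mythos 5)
Your proposal is correct and takes the same approach as the paper: the corollary is a direct specialisation of the Main Theorem to the sequence $(\Gamma_n)_{n\geqslant 1}=(D_{n+2})_{n\geqslant 1}$ with the rightmost vertex preferred, giving $W_n=H_{n+2}$ and hence the stated stability range $2\ast\leqslant n$. Your verifications—that $D_3$ fits the $\Gamma_1$ template with the branch point as preferred vertex, that each $D_{n+2}$ extends $D_{n+1}$ by one unlabelled edge, and that the type $D_n$ Coxeter group coincides with the index-two subgroup of $C_2\wr\Sigma_n$ of even sign changes—are exactly the points the paper relies on (citing \S6.7 of Davis for the last one).
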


The concrete descriptions of the groups of type $A_n$,
$B_n$ and $D_n$ that we used here can all be found
in section~6.7 of~\cite{Davis}.

\subsection*{Families of hyperbolic Coxeter groups}
The main theorem applies to interesting
families besides those of type $A_n$, $B_n$ and $D_n$
already considered. For example, if we fix an integer $m\geqslant 7$,
then the main theorem shows that homological stability
holds for the family of Coxeter groups
associated to the sequence of diagrams
$(\Gamma_n)_{n\geqslant 1}$
\[
\begin{tikzpicture}[scale=0.18, baseline=-3]
	\draw[fill= black] (0,0) circle (0.5);
	\draw[line width=1] (0,0) -- node[above] {$m$} (5,0);
	\draw[fill= black] (5,0) circle (0.5);
	\draw[fill= black] (10,0) circle (0.5);
	\draw[fill= black] (15,0) circle (0.5);
	\draw[fill= black] (20,0) circle (0.5);
	\draw[line width=1] (5,0) -- (10,0);
	\draw[line width=1, dotted] (10,0) -- (15,0);
	\draw[line width=1] (15,0) -- (20,0);
	\node at (10,-4) {$\Gamma_n$};
\end{tikzpicture}
\]
in which $\Gamma_n$ has a total of $(n+1)$ vertices,
the rightmost one preferred.
This family has the feature that the first group
is finite, while the rest are all infinite hyperbolic.
(Hyperbolicity is verified using Moussong's condition.
See Corollary~12.6.3 of~\cite{Davis}.)
It is not difficult to create other sequences
of infinite hyperbolic groups to which the main theorem applies.

On the other hand, these examples show that
it is possible for the first group in one of our families to be finite
while the rest are infinite.  Similarly, it is possible for 
the first group to be infinite hyperbolic while the rest are
not, for example if $\Gamma_1 = 
\begin{tikzpicture}[scale=0.1, baseline=-1]
	\draw[line width=1] (0,0) -- (5,0);
	\draw[line width=1] (5,0) -- (10,0);
	\draw[fill= black] (0,0) circle (0.5);
	\draw[fill= black] (5,0) circle (0.5);
	\draw[fill= black] (10,0) circle (0.5);
	\node at (2.5,1.5) {$\scriptstyle 6$} ;
	\node at (7.5,1.5) {$\scriptstyle 6$} ;
\end{tikzpicture}$
with any preferred vertex.  (The claims
about hyperbolicity again follow from Moussong's condition.)

\subsection*{Overview of the proof}
The proof of the main theorem proceeds as follows.

First, we construct  a simplicial complex 
$\C^n$ with an action of $W_n$, and prove 
that it is  {weakly Cohen-Macaulay of dimension $n$}, 
meaning that it is $(n-1)$-connected in a certain `homogeneous' way. 

Second, we form the semisimplicial set $\D^n$
whose simplices are simplices of $\C^n$
with an ordering of their vertices.
We show that $W_n$ acts transitively on the simplices in each dimension,
with stabilisers the subgroups $W_m\subset W_n$ for $m<n$.
From the weakly Cohen-Macaulay property of $\C^n$ we deduce that
the geometric realisation $\|\D^n\|$ is $(n-1)$-connected.

Third, we study the Borel construction $EW_n\times_{W_n}\|\D^n\|$.
From properties of $\D^n$ we deduce that the 
homology of the Borel construction matches
that of $BW_n$ in a range of degrees,
and that it can be computed by a spectral sequence
whose $E^1$-term consists of the homology groups 
$H_\ast(BW_m)$ for $m<n$.  An argument involving 
this spectral sequence completes the proof.

Let us explain in more detail how we
show that $\C^n$ is $(n-1)$-connected, since
this is by far the hardest step in the proof.
We make use of the `basic construction', a technique from the
topology of Coxeter groups (see chapters 5 and 8 of Davis's 
book~\cite{Davis}).  The basic construction
takes a Coxeter system $(W,S)$ and a `mirrored space' $X$ over
$S$, and produces a topological space $\U(W,X)$ with $W$-action.  
Now $\U(W,X)$ can be expressed as an `increasing union
of chambers', i.e.~copies of $X$, each copy of $X$ being attached
to the preceding ones in a controlled way.  
To prove that $\C^n$ is $(n-1)$-connected,
we show that the realisation of the barycentric 
subdivision of $\C^n$ has the form
$\U(W_n,|\Delta|)$, where $|\Delta|$ is a topological $n$-simplex.
By studying the attachments in the increasing union of chambers, 
we are able to deduce the required connectivity.

\subsection*{Outline of the paper}
In section~\ref{notation} we establish some notation,
and in section~\ref{preliminaries} we prove some 
elementary algebraic facts about
the groups $W_n$.
In section~\ref{Cn:section} we define
the simplicial complex $\C^n$. 
Then we examine $\C^n$ in detail: 
in section~\ref{links:section}
we study the links of its simplices, in section~\ref{action:section}
we study its action by $W_n$, and in section~\ref{basic:section}
we show that it is $(n-1)$-connected.
Then in section~\ref{ordered:section} we study the
semisimplicial set of ordered simplices in $\C^n$.
The proof of the main theorem is completed in 
section~\ref{completion:section}.

\subsection*{Acknowledgments}
My thanks to Jarek K\c{e}dra, Ian Leary
and Oscar Randal-Williams for helpful
conversations as this work was being carried out.

\section{Notation}
\label{notation}

\begin{definition}
Let $(\Gamma_n)_{n\geqslant 1}$ be a sequence
of the kind described in the introduction. 
We extend
this sequence to the left by two terms as follows.
Define $\Gamma_0$ to be the diagram obtained from $\Gamma_1$ 
by deleting the preferred vertex. And define
$\Gamma_{-1}$ to be the diagram obtained from $\Gamma_1$
by deleting the preferred vertex and all vertices
that shared an edge with it.
\end{definition}

\begin{example}[Coxeter groups of type $A_n$, $B_n$ and $D_n$]
For the sequence $(A_n)_{n\geqslant 1}$, the diagrams $A_0$ and $A_{-1}$
are both empty.  For $(B_{n+1})_{n\geqslant 1}$, the diagram 
$B_{0+1}$ consists of a single vertex and $B_{-1+1}$ is empty.
And for $(D_{n+2})_{n\geqslant 1}$, the diagram $D_{0+2}$ consists
of two vertices with no edge and $D_{-1+2}$ is empty.
\end{example}

\begin{definition}
Let $(\Gamma_n)_{n\geqslant 1}$ be a sequence
of the kind described in the introduction, and
let $(\Gamma_n)_{n\geqslant -1}$ be the extension
described above.  Then for $n\geqslant -1$ we define
$S_n$ to be the set of vertices of $\Gamma_n$, and
we define $W_n$ to be the Coxeter group associated
to $\Gamma_n$. Thus $(W_n,S_n)$ is a Coxeter system.
\end{definition}

\begin{definition}
For $n\geqslant 1$ we define $s_n\in S_n$ to be
the preferred vertex of $\Gamma_n$.
\[
\begin{tikzpicture}[scale=0.18, baseline=0]
	\draw[line width=1, fill = white!80!black, rounded corners=5 pt]
		(5,0) --  (1,4) -- (-3,0) -- (1,-4) -- (5,0);
	\draw[fill= black] (5,0) circle (0.5) node[below] {$s_1$};
	\draw[line width=1] (5,0) -- (10,0);
	\draw[fill= black] (10,0) circle (0.5) node[below] {$s_2$};
	\draw[line width=1, dotted] (10,0) -- (17.5,0);
	\draw[fill= black] (17.5,0) circle (0.5)node[below] {$s_{n-1}$};
	\draw[line width=1] (17.5,0) -- (22.5,0);
	\draw[fill= black] (22.5,0) circle (0.5)node[below] {$s_{n}$};
\end{tikzpicture}
\]
Thus $S_n=S_0\cup\{s_1,\ldots,s_n\}$, and the parabolic
subgroup of $W_n$ generated by $s_1,\ldots,s_n$
is an isomorphic copy of $\Sigma_{n+1}$, with
$s_i$ acting as the transposition of $i$ and $(i+1)$.
\end{definition}

\section{Algebraic preliminaries}
\label{preliminaries}
From this point onwards, unless stated otherwise
we fix the sequence $(\Gamma_n)_{n\geqslant 1}$
and the integer $n\geqslant 1$.

In several places we will consider the symbol
$s_i\cdots s_n W_{n-1}$ for $i$ in the range
$1\leqslant i\leqslant n+1$.  In the
case $i=n+1$ we take it to mean $W_{n-1}$.

\begin{proposition}\label{permutations}
Let $i$ lie in the range $1\leqslant i\leqslant n$.
Then left multiplication by the element $s_i$
fixes the set
\[
	\{
		s_1\cdots s_nW_{n-1},\ 
		s_2\cdots s_nW_{n-1},\ 
		\ldots,\ 
		s_n W_{n-1},\ 
		W_{n-1}
	\}.
\]
It acts on the set
by transposing $s_i\cdots s_nW_{n-1}$
and $s_{i+1}\cdots s_nW_{n-1}$, and
fixing the remaining elements.
\end{proposition}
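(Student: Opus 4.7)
The plan is to reduce everything to a standard fact about the symmetric group $\Sigma_{n+1}$, which (as noted in the paper) is the parabolic subgroup $\langle s_1,\ldots,s_n\rangle\subseteq W_n$, with $s_j$ acting on $\{1,\ldots,n+1\}$ as the transposition $(j,j+1)$.

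First I would work inside $\Sigma_{n+1}$ and consider the subgroup $\Sigma_n=\langle s_1,\ldots,s_{n-1}\rangle$, viewed as the stabiliser of $n+1$ under the natural action. A direct computation (by repeated application of the transpositions) shows that the element $s_i s_{i+1}\cdots s_n$ sends $n+1$ to $i$, and that the identity fixes $n+1$. Hence the $n+1$ cosets
\[
	s_1\cdots s_n\Sigma_n,\ s_2\cdots s_n\Sigma_n,\ \ldots,\ s_n\Sigma_n,\ \Sigma_n
\]
correspond bijectively to $1,2,\ldots,n+1$ via $g\Sigma_n\mapsto g(n+1)$, and this identification is equivariant for the left-multiplication action of $\Sigma_{n+1}$ on cosets. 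Under it, left multiplication by $s_i$ is the transposition $(i,i+1)$, which swaps the cosets labelled $i$ and $i+1$ and fixes the rest. This yields the desired permutation, but with $W_{n-1}$ replaced by $\Sigma_n$.

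Next I would lift this from $\Sigma_n$ to $W_{n-1}$. The key observation is that the natural map
\[
	\Sigma_{n+1}/\Sigma_n \longrightarrow W_n/W_{n-1},\qquad g\Sigma_n \longmapsto gW_{n-1},
\]
is well-defined and injective, because $\Sigma_n\subseteq W_{n-1}$ and, conversely, any element of $\Sigma_{n+1}\cap W_{n-1}$ lies in $\Sigma_n$. The latter is an instance of the standard fact that in any Coxeter system the intersection of two standard parabolic subgroups is again a standard parabolic on the intersection of the generating sets: applied to $I=\{s_1,\ldots,s_n\}$ and $J=S_{n-1}=S_0\cup\{s_1,\ldots,s_{n-1}\}$ inside $(W_n,S_n)$, this gives $W_I\cap W_J = W_{I\cap J} = \langle s_1,\ldots,s_{n-1}\rangle = \Sigma_n$. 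Since the injection is $\Sigma_{n+1}$-equivariant and every coset in the statement has a representative in $\Sigma_{n+1}$, the action of $s_i$ on the cosets $s_j\cdots s_n W_{n-1}$ is determined by its action on the $s_j\cdots s_n\Sigma_n$, and the result follows.

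The only nontrivial ingredient is the parabolic intersection theorem $W_I\cap W_J=W_{I\cap J}$, which is a standard result in the theory of Coxeter groups (see, e.g., \cite{Davis}). Everything else is a direct computation in $\Sigma_{n+1}$, so I do not expect any further obstacle.
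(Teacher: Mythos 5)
Your proof is correct, but it takes a genuinely different route from the one in the paper. The paper's argument is a direct verification: it records the four identities
$s_i(s_j\cdots s_n)=(s_j\cdots s_n)s_i$ for $i<j-1$,
$s_i(s_{i+1}\cdots s_n)=s_i\cdots s_n$,
$s_i(s_i\cdots s_n)=s_{i+1}\cdots s_n$, and
$s_i(s_j\cdots s_n)=(s_j\cdots s_n)s_{i-1}$ for $i>j$
(each an easy consequence of the commutation and braid relations among $s_1,\ldots,s_n$), then notes that $s_i,s_{i-1}\in W_{n-1}$ to conclude. You instead exploit the identification of $\langle s_1,\ldots,s_n\rangle$ with $\Sigma_{n+1}$, read off the action on $\Sigma_{n+1}/\Sigma_n\cong\{1,\ldots,n+1\}$ as the transposition $(i,\,i{+}1)$, and transport the conclusion along the equivariant map $\Sigma_{n+1}/\Sigma_n\to W_n/W_{n-1}$. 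Both are valid; yours is more conceptual and makes the permutation structure immediately transparent, at the cost of invoking the parabolic-intersection theorem $W_I\cap W_J=W_{I\cap J}$, whereas the paper's argument is entirely self-contained and requires only the defining relations. One small remark: for the permutation computation itself you only need well-definedness (equivariance) of the map $\Sigma_{n+1}/\Sigma_n\to W_n/W_{n-1}$, not its injectivity — the injectivity you prove via the parabolic intersection is really what shows the $n{+}1$ cosets are pairwise distinct, which the paper defers to Proposition~\ref{distinct}.
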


\begin{proof}
The identities
\begin{align*}
	s_i(s_j\cdots s_n) &= (s_j\cdots s_n)s_i
	\text{ for }i<j-1
	\\
	s_i(s_{i+1}\cdots s_n) &= s_i\cdots s_n
	\\
	s_{i}(s_{i}\cdots s_n)&=s_{i+1}\cdots s_n
	\\
	s_i(s_j\cdots s_n) &= (s_j\cdots s_n)s_{i-1}
	\text{ for }i>j
\end{align*}
are simple to verify, and the claim follows
immediately.
\end{proof}

\begin{proposition}\label{intersection}
$
	W_{i-1}\cap (s_i\cdots s_n W_{n-1}s_n\cdots s_i)
	=
	W_{i-2}
$
for $1\leqslant i\leqslant n$.
\end{proposition}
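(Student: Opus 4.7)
The plan is to establish the two inclusions separately, handling the reverse inclusion by induction on $n-i$. For the inclusion $W_{i-2}\subseteq W_{i-1}\cap (s_i\cdots s_nW_{n-1}s_n\cdots s_i)$, it suffices to check that every element of $S_{i-2}$ commutes with each of $s_i,s_{i+1},\ldots,s_n$, since then conjugation by $s_i\cdots s_n$ fixes $W_{i-2}$ pointwise and the containment $W_{i-2}\subseteq W_{i-1}$ is immediate from $S_{i-2}\subseteq S_{i-1}$. The required commutations follow from the shape of $\Gamma_n$: the vertices of $\Gamma_0$ are joined only to $s_1$, and $s_k$ for $k\geq 1$ is joined only to $s_{k-1}$ and $s_{k+1}$. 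The case $i=1$ is handled by the very definition of $S_{-1}$ as the vertices of $\Gamma_0$ not adjacent to $s_1$, which guarantees commutation with $s_1$ as well.

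For the reverse inclusion, set $A_j = s_j\cdots s_nW_{n-1}s_n\cdots s_j$, so that $A_j=s_jA_{j+1}s_j$. I argue by induction on $n-i$. In the inductive step, given $g\in W_{i-1}\cap A_i$ with $i<n$, note that $s_igs_i$ lies both in $A_{i+1}$ (since $A_i=s_iA_{i+1}s_i$) and in $W_i$ (since $g\in W_{i-1}\subseteq W_i$ and $s_i\in W_i$). The inductive hypothesis $W_i\cap A_{i+1}=W_{i-1}$ then places $s_igs_i$ in $W_{i-1}$, so $g$ lies in $W_{i-1}\cap s_iW_{i-1}s_i$; applying the base case with $n$ replaced by $i$ gives $g\in W_{i-2}$.

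The main obstacle is therefore the base case $W_{n-1}\cap s_nW_{n-1}s_n\subseteq W_{n-2}$. I would derive it from the following general fact about Coxeter groups: in any Coxeter system $(W,S)$, for $T\subseteq S$ and $s\in S\setminus T$, one has $W_T\cap sW_Ts=W_{T'}$, where $T'=\{t\in T:m_{st}=2\}$ is the set of generators of $T$ that commute with $s$. Applied to $T=S_{n-1}$ and $s=s_n$: since $s_n$ is joined only to $s_{n-1}$ in $\Gamma_n$, the set $T'$ is exactly $S_{n-2}$, yielding the base case. The general fact itself can be proved using Tits's solution to the word problem: every reduced expression of an element of $W_T$ uses only generators from $T$, and the strong exchange condition forces the reduced expression of any $g\in W_T$ with $sgs\in W_T$ to avoid every generator of $T$ adjacent to $s$.
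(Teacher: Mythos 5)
Your argument is correct but takes a genuinely different route from the paper's. The paper's proof checks that the word $s_i\cdots s_n$ is $(W_{i-1},W_{n-1})$-reduced and then applies the full Kilmoyer--Solomon--Tits theorem in one shot, yielding $W_{i-1}\cap(s_i\cdots s_n)W_{n-1}(s_n\cdots s_i)=W_K$ with $K=S_{i-1}\cap(s_i\cdots s_n)S_{n-1}(s_n\cdots s_i)$, and finishes by computing $K=S_{i-2}$. You instead split off the easy inclusion $W_{i-2}\subseteq W_{i-1}\cap A_i$ by a direct commutation argument (correct, using the shape of $\Gamma_n$), and for the reverse inclusion peel off one conjugating letter at a time: the identity $A_i=s_iA_{i+1}s_i$ plus the inductive hypothesis puts any $g$ in the intersection into $W_{i-1}\cap s_iW_{i-1}s_i$, and then the rank-one statement $W_T\cap sW_Ts=W_{T'}$ (with $T'=\{t\in T:m_{st}=2\}$) closes the loop. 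This is a valid reorganisation: it trades the full double-coset theorem for only its rank-one special case at the cost of a short induction, and it neatly sidesteps having to verify that $s_i\cdots s_n$ is double-coset reduced.

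One caveat worth flagging: your sketch of the rank-one fact is not yet a proof. The exchange condition does readily give $sgs=g$ for $g\in W_T\cap sW_Ts$ (any deletion other than the leading $s$ from the reduced word $s\,t_1\cdots t_k$ would leave $s$ appearing in a reduced expression for the element $sgs\in W_T$, which is impossible). But passing from ``$g$ commutes with $s$'' to ``$g\in W_{T'}$'' needs an additional argument; it is not an immediate consequence of strong exchange. Two clean ways to finish are: (i) observe that $sgs=g$ and $\ell(sg)>\ell(g)$ force $g\alpha_s=\alpha_s$ in the geometric representation, and then show by induction on $\ell(g)$ that this makes every letter of a reduced word for $g\in W_T$ commute with $s$ (if a terminal letter $t_k$ had $m_{st_k}\geqslant 3$, one would get $\alpha_s$ expressed as a positive combination of two distinct positive roots, which is impossible); or (ii) simply quote the same Kilmoyer--Solomon--Tits formula the paper uses, in the form $W_T\cap sW_Ts=W_{T\cap sTs}$, and note that $T\cap sTs=T'$ because $sts$ has length $3$ when $m_{st}\geqslant 3$ and so cannot lie in $S$. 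Either fix makes your argument complete; option (ii) would, however, bring you back to the paper's main ingredient, though still only in its rank-one form.
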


\begin{proof}
The word $s_i\cdots s_n$ is $(W_{i-1},W_{n-1})$-reduced,
meaning that it is reduced and has no reduced expression 
beginning with a generator of $W_{i-1}$ or ending with a 
generator of $W_{n-1}$.  A result of Kilmoyer, Solomon and Tits
(see Lemma~2 of~\cite{Solomon} and the remarks that precede it)
then shows that $W_{i-1}\cap s_i\cdots s_n W_{n-1}s_n\cdots s_i$
is the sugroup generated by 
$T=S_{i-1}\cap (s_i\cdots s_n S_{n-1}s_n\cdots s_i)$.
So it will be enough to show that $T=S_{i-2}$.
It is immediate that $S_{i-2}\subset T$,
so suppose that $t\in T\setminus S_{i-2}$.
Thus
	$t\in S_{i-1}\setminus S_{i-2}$
and	
	$s_n\cdots s_i t s_i\cdots s_n\in S_{n-1}$.
By the first condition we have $m(s_i,t)\geqslant 3$.
By the second condition the word 
$s_n\cdots s_i t s_i\cdots s_n$ is not reduced,
so that we must be able to apply an M-move to it 
(see section~3.4 of~\cite{Davis}), 
and this is only possible if 
$m(s_i,t)$ is exactly $3$.  But in this case 
$s_n\cdots s_i t s_i\cdots s_n$ is already reduced,
contradicting the first condition.
\end{proof}

\begin{proposition}
\label{tuple-stabilizer}
Let $i$ lie in the range $1\leqslant i\leqslant n$.
If $\sigma,\tau\in W_n$ satisfy
\[\sigma s_j\ldots s_n W_{n-1}
=
\tau s_j\ldots s_nW_{n-1}\ \text{ for }
j=i,\ldots,n+1
\]
then $\sigma^{-1}\tau\in W_{i-2}$.
\end{proposition}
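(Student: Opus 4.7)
The plan is to reformulate the hypothesis in terms of the single element $\rho = \sigma^{-1}\tau$. The equation $\sigma s_j\cdots s_n W_{n-1} = \tau s_j\cdots s_n W_{n-1}$ is equivalent to $\rho \cdot s_j\cdots s_n W_{n-1} = s_j\cdots s_n W_{n-1}$, that is, to
\[
    \rho \in s_j\cdots s_n\, W_{n-1}\, s_n\cdots s_j
\]
for each $j$ in the range $i \leqslant j \leqslant n+1$. Under the convention introduced just before Proposition~\ref{permutations}, the $j=n+1$ instance of this inclusion is simply $\rho \in W_{n-1}$. The goal is then to show $\rho \in W_{i-2}$.

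The key observation is that Proposition~\ref{intersection} is tailor-made to cut down the subgroup containing $\rho$ one index at a time: it says exactly that
\[
    W_{j-1} \cap \bigl(s_j\cdots s_n\, W_{n-1}\, s_n\cdots s_j\bigr) = W_{j-2}
\]
for $1 \leqslant j \leqslant n$. So if we have already established $\rho \in W_{j-1}$ and we still have the hypothesis-given containment $\rho \in s_j\cdots s_n W_{n-1} s_n \cdots s_j$, intersecting the two yields $\rho \in W_{j-2}$.

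The proof therefore proceeds by downward induction on $j$ from $j = n+1$ to $j = i$. The base case $j = n+1$ is free: it is the hypothesis at $j = n+1$, which gives $\rho \in W_{n-1}$. For the inductive step, suppose $\rho \in W_{j-1}$ for some $j$ with $i \leqslant j \leqslant n$; combining this with the hypothesis at the same value of $j$ and applying Proposition~\ref{intersection} gives $\rho \in W_{j-2}$. After the final step at $j = i$, we obtain $\rho \in W_{i-2}$, as required.

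I do not expect any real obstacle here: Propositions~\ref{permutations} and~\ref{intersection} have been stated precisely so that this induction runs cleanly, and the only thing to watch is that the convention $s_{n+1}\cdots s_n W_{n-1} = W_{n-1}$ is what correctly starts the induction at $j = n+1$.
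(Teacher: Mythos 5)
Your proof is correct and takes essentially the same approach as the paper: the paper rephrases the statement as an equality $W_{n-1}\cap (s_n W_{n-1}s_n)\cap\cdots\cap(s_i\cdots s_n W_{n-1}s_n\cdots s_i)=W_{i-2}$ and proves it by downward induction using Proposition~\ref{intersection}, which is precisely your induction carried out on the element $\rho=\sigma^{-1}\tau$.
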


\begin{proof}
The proposition is equivalent to the claim that
\[
	W_{n-1}
	\cap
	(s_n W_{n-1}s_n)
	\cap
	\cdots
	\cap
	(s_i\cdots s_n W_{n-1}s_n\cdots s_i)
	=
	W_{i-2},
\]
which is proved by
downward induction on $i$.
The initial case $i=n+1$ is immediate,
and the induction step follows from
Proposition~\ref{intersection}.
\end{proof}

\begin{proposition}\label{distinct}
For $c\in W_n$ the cosets
\[
	c(s_1\cdots s_n)W_{n-1},\ 
	\ldots,\ 
	cs_n W_{n-1},\ 
	cW_{n-1}
\]
are pairwise distinct.
\end{proposition}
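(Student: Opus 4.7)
My plan is to reduce to the case $c = e$, since left multiplication by $c$ is a bijection of $W_n/W_{n-1}$, and then show that the cosets $x_j := s_j \cdots s_n W_{n-1}$ for $1 \le j \le n$, together with $x_{n+1} := W_{n-1}$, are pairwise distinct.

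Suppose for contradiction that $x_a = x_b$ for some $1 \le a < b \le n+1$. If $b > a+1$, then applying $s_{b-1}$ to both sides has the following effect, by Proposition~\ref{permutations}: it fixes $x_a$ (since $a \ne b-1, b$) and sends $x_b$ to $x_{b-1}$. Hence $x_a = x_{b-1}$, which is a coincidence with strictly smaller gap $b-1-a$. Iterating, I arrive at a consecutive coincidence $x_k = x_{k+1}$ for some $1 \le k \le n$.

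I then propagate this consecutive coincidence in both directions. For $k < n$, applying $s_{k+1}$ to $x_k = x_{k+1}$ fixes the left-hand side (since $k \ne k+1, k+2$) and sends the right-hand side to $x_{k+2}$, which forces $x_k = x_{k+2}$ and hence $x_{k+1} = x_{k+2}$. Similarly, for $k > 1$, applying $s_{k-1}$ yields $x_{k-1} = x_k$. Iterating these two cascades to their respective endpoints forces $x_1 = x_2 = \cdots = x_{n+1}$; in particular $s_1 s_2 \cdots s_n \in W_{n-1}$.

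To obtain the contradiction, I need to show $s_1 s_2 \cdots s_n \notin W_{n-1}$. Left-multiplying by $s_{n-1} s_{n-2} \cdots s_1 \in W_{n-1}$ and cancelling the telescoping pairs $s_j s_j = e$ for $j = 1, \ldots, n-1$ in turn, the product collapses to the single generator $s_n$. If $s_1 \cdots s_n$ lay in $W_{n-1}$, then so would $s_n$, contradicting the standard fact about Coxeter groups that a generator $s_n \in S_n \setminus S_{n-1}$ does not lie in the parabolic subgroup $W_{S_{n-1}} = W_{n-1}$. The delicate point is the bookkeeping in the two cascades, making sure the indices used when applying Proposition~\ref{permutations} stay in its valid range $1 \le i \le n$.
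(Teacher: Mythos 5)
Your proof is correct, but it takes a noticeably different route from the paper's. The paper argues directly: if $cs_j\cdots s_nW_{n-1}=cs_k\cdots s_nW_{n-1}$ with $j<k$, then $(s_n\cdots s_j)(s_k\cdots s_n)\in W_{n-1}$, and a single algebraic rewriting (a braid move, valid because consecutive $s_i$'s satisfy $m_{s_is_{i+1}}=3$) expresses this element as a conjugate of $s_n$ times an element of $W_{n-1}$, forcing $s_n\in W_{n-1}$ and the contradiction in one step. You instead exploit the $\Sigma_{n+1}$-like action of $s_1,\ldots,s_n$ recorded in Proposition~\ref{permutations}: starting from an arbitrary coincidence $x_a=x_b$, the two-sided cascade collapses the whole chain to $x_1=\cdots=x_{n+1}$, and you then only need to dispose of the single extreme coincidence $s_1\cdots s_n\in W_{n-1}$, which the telescoping cancellation $(s_{n-1}\cdots s_1)(s_1\cdots s_n)=s_n$ reduces to the same standard fact $s_n\notin W_{n-1}$. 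Your approach reuses Proposition~\ref{permutations} as a black box rather than redoing algebra, and the cascade neatly avoids index-juggling with general $j<k$; the cost is length, since the cascade proves a stronger intermediate conclusion (total collapse) than is strictly needed. One small point of rigor worth stating explicitly: Proposition~\ref{permutations} is phrased as an action on a set of cosets, which a priori presumes distinctness, but the identities in its proof give the pointwise statement $s_i x_j = x_{\sigma_i(j)}$ with $\sigma_i=(i\,\,i{+}1)$ unconditionally, and that is what your cascade actually uses — so the argument is sound, though it would read more cleanly if you invoked those identities rather than the set-level phrasing.
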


\begin{proof}
If $cs_j\cdots s_nW_{n-1}=cs_k\cdots s_nW_{n-1}$
with $j<k$, then $(s_n\cdots s_j)(s_k\cdots s_n)\in W_{n-1}$.
But 
$
	(s_n\cdots s_j)(s_k\cdots s_n)
	=
	(s_{k-1}\cdots s_n\cdots s_{k-1})(s_{k-2}\cdots s_j),
$
implying that $s_n\in W_{n-1}$, which is a contradiction.
\end{proof}

\section{The simplicial complex $\C^n$}
\label{Cn:section}
Now we introduce the simplicial complex that will be central
to our proof of the main theorem, and we prove that it is weakly
Cohen-Macaulay of dimension~$n$.  The proof relies on propositions
that will be established in the following three sections.

\begin{definition}[The simplicial complex $\C^n$]
\label{Cndefinition}
Given $n\geqslant 0$ we let $\C^n$ denote the $n$-dimensional
abstract simplicial complex with vertex set $W_n/W_{n-1}$
and with $k$-simplices given by the subsets
\[
	C=\{
		c (s_{n-k+1}\cdots s_n) W_{n-1},\ 
		\ldots,\ 
		c s_nW_{n-1},\ 
		c W_{n-1}	
	\}
\]
for $0\leqslant k\leqslant n$ and $c\in W_n$.
(Proposition~\ref{distinct} shows that $C$ does 
indeed have cardinality $(k+1)$.)
In this situation
we call $c$ a \emph{lift} of the simplex $C$.  
\end{definition}

\begin{remark}
We chose the name ``lift'' in the previous definition
to emphasise the formal similarity with the concept of
the same name that appears in Definition~2.1 
of Wahl's paper~\cite{WahlAutomorphism}.
\end{remark}

A given simplex can have many lifts.
Choosing a lift for a
simplex induces an ordering of its vertices,
and all orderings occur in this way.
For if $c$ lifts a $k$-simplex
$C$ then so does $c s_{n-k+i+1}$, and
the induced orderings differ by transposition
of the $i$-th and $(i+1)$-st vertices
(see Proposition~\ref{permutations}).
This makes it simple to verify that $\C^n$ is
indeed a simplicial complex, for if $C$ is a
simplex of $\C^n$ and $D\subset C$ is a nonempty
subset, then we may choose a lift $c$
of $C$ such that $D$ is a terminal
segment in the induced ordering.
Then $c$ is also a lift of $D$.

The natural action of $W_n$ on $W_n/W_{n-1}$
extends to an action on $\C^n$.
For if $C$ is a simplex of $\C^n$ with
lift $c$, and if $w\in W_n$, then
$wC$ is a simplex of $\C^n$ with lift
$wc$.

We now give a concrete description of $\C^n$
for the families of Coxeter groups of type
$A_n$, $B_n$ and $D_n$ that were discussed in the introduction.  
Again, see section~6.7 of~\cite{Davis} for the concrete
descriptions of these groups.

\begin{example}[The family $A_n$]\label{An:example}
Let the sequence of diagrams $(\Gamma_n)_{n\geqslant 1}$ be
$(A_n)_{n\geqslant 1}$, so that $W_n=\Sigma_{n+1}$
is the symmetric group on $(n+1)$ letters.
Then $\C^n$ is the $n$-dimensional simplex $\Delta^n$ with the
action of $\Sigma_{n+1}$ that permutes the vertices.
For the vertex set of $\C^n$ is $\Sigma_{n+1}/\Sigma_n$,
which is isomorphic to $\{1,\ldots,n+1\}$ \emph{via} the map that
sends $\sigma\Sigma_{n}$ to $\sigma(n+1)$.
Under this isomorphism, an element $\sigma\in\Sigma_{n+1}$
is a lift of the $k$-simplex
\[
	C=\{\sigma(n-k+1),\ldots,\sigma(n+1)\},
\]
and every subset of $\{1,\ldots,n+1\}$ arises in this way.
\end{example}

\begin{example}[The family $B_n$]\label{Bn:example}
Let the sequence of diagrams $(\Gamma_n)_{n\geqslant 1}$ be
given by
$(B_{n+1})_{n\geqslant 1}$, so that $W_n=C_2\wr\Sigma_{n+1}$.
In this case $\C^n$ is isomorphic to the 
\emph{hyperoctahedron of dimension $n$}, which is
the simplicial complex whose vertex set is $\{\pm1,\ldots,\pm(n+1)\}$
and whose simplices are the subsets containing at most one
element from each pair $\{i,-i\}$.
In particular, its realisation is homeomorphic to the $n$-sphere.
The action of $C_2\wr\Sigma_{n+1}$ on the hyperoctahedron
is the one in which $\Sigma_{n+1}$ permutes the
pairs $\pm i$ while preserving their signs, and in which
the $i$-th copy of $C_2$ transposes $i$ and $-i$. 
To obtain this description, observe that 
the vertex set of $\C^n$ is 
$(C_2\wr\Sigma_{n+1})/(C_2\wr\Sigma_n)$,
which is isomorphic to $\{\pm 1,\ldots,\pm (n+1)\}$
\emph{via} the map that sends the coset of
$((\epsilon_1,\ldots,\epsilon_{n+1}),\sigma)$ to
$\epsilon_{n+1}\sigma(n+1)$.  (Here we are taking $C_2=\{\pm 1\}$.)
And under this isomorphism an element 
$((\epsilon_1,\ldots,\epsilon_{n+1}),\sigma)$ lifts the
$k$-simplex
\[
	C=\{\epsilon_{n-k+1}\sigma(n-k+1),\ldots,\epsilon_{n+1}\sigma(n+1)\},
\]
so that a subset of $\{\pm 1,\ldots,\pm(n+1)\}$ spans a simplex
of $\C^n$ if and only if it does not contain any element
and its negative.
\end{example}

\begin{example}[The family $D_n$]\label{Dn:example}
Let the sequence of diagrams $(\Gamma_n)_{n\geqslant 1}$ be
given by $(D_{n+2})_{n\geqslant 1}$, so that $W_n=H_{n+2}$ is the
kernel of the homomorphism $C_2\wr\Sigma_{n+2}\to C_2$
that takes the sum of the $C_2$-components.  In this case
$\C^n$ is the $n$-skeleton of the $(n+1)$-dimensional hyperoctahedron,
with the action inherited from the action of $C_2\wr\Sigma_{n+2}$.
(See Example~\ref{Bn:example}.)  In particular,
the realisation of $\C^n$ has the homotopy type of the
wedge of $(2^n -1)$ copies of the $n$-dimensional sphere.
To obtain this description observe that the vertex set
$H_{n+2}/H_{n+1}=(C_2\wr\Sigma_{n+2})/(C_2\wr\Sigma_{n+1})$
can be identified with $\{\pm1,\ldots,\pm(n+2)\}$ \emph{via}
the map sending the coset of 
$((\epsilon_1,\ldots,\epsilon_{n+2}),\sigma)$
to $\epsilon_{n+2}\sigma(n+2)$, 
and that under this identification the $k$-simplex with lift 
$((\epsilon_1,\ldots,\epsilon_{n+2}),\sigma)$ is
\[
	C=\{\epsilon_{n-k+2}\sigma(n-k+2),\ldots,\epsilon_{n+2}\sigma(n+2)\},
\]
so that a subset of $\{\pm 1,\ldots,\pm(n+2)\}$ spans
a simplex if and only if it does not contain any element
and its negative.
\end{example}

Recall from Definition~3.4 of~\cite{HatcherWahl} 
that a simplicial complex is called \emph{weakly Cohen-Macaulay
of dimension $n$} if it is $(n-1)$-connected
and the link of each $p$-simplex is
$(n-p-2)$-connected.
In each of the three examples above, $\C^n$ has
the homotopy type of a wedge of $n$-dimensional spheres,
and so is $(n-1)$-connected.
In fact, it is not hard to see that in these examples 
$\C^n$ is weakly Cohen-Macaulay
of dimension $n$.  This is an instance of the following
general fact.

\begin{theorem}\label{CohenMacaulay}
$\C^n$ is weakly Cohen-Macaulay of dimension $n$.
\end{theorem}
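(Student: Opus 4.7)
The plan is to split the statement into two parts: the $(n-1)$-connectivity of $\C^n$ itself, and the $(n-p-2)$-connectivity of the link of each $p$-simplex. The second is, in principle, a consequence of the first applied at smaller values of $n$, while the first is the genuinely deep statement that the rest of the paper revolves around. I would therefore handle the link statement by reducing it, via induction on $n$, to the global connectivity statement, and then treat the global connectivity separately using the basic construction.

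For the link of a $p$-simplex, I would first exploit the $W_n$-action on $\C^n$ (together with the transitivity on simplices to be established in section~\ref{action:section}) to reduce to the standard $p$-simplex
\[
    C_0=\{s_{n-p+1}\cdots s_nW_{n-1},\ \ldots,\ s_nW_{n-1},\ W_{n-1}\}
\]
lifted by the identity. A simplex containing $C_0$ admits a lift $c$ for which $C_0$ sits as a terminal segment, and Proposition~\ref{tuple-stabilizer} (applied with $i=n-p+1$) forces any such $c$ to lie in $W_{n-p-1}$. This places the vertices of $\lk(C_0)$ in the form $c\,s_{n-p}\cdots s_nW_{n-1}$ for $c\in W_{n-p-1}$, and Proposition~\ref{intersection} at $i=n-p$ identifies two such cosets precisely when the underlying elements of $W_{n-p-1}$ represent the same class modulo $W_{n-p-2}$. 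The same analysis, combined with Proposition~\ref{permutations} to track ordered lifts, identifies the simplices on both sides and yields an isomorphism $\lk(C_0)\cong\C^{n-p-1}$ of simplicial complexes. Granting the $(n-p-2)$-connectivity of $\C^{n-p-1}$ by induction on $n$, this gives the required link statement.

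The main obstacle — and the step where the real work of the proof lies — is the $(n-1)$-connectivity of $\C^n$ itself, which serves as the base of the induction and cannot be sidestepped. Here the plan is the one sketched in the introduction: pass to the barycentric subdivision $\sd\C^n$, identify it with the basic construction $\U(W_n,|\Delta|)$ for a suitably mirrored topological $n$-simplex $|\Delta|$, and then extract the connectivity from Davis's description of $\U(W_n,|\Delta|)$ as an increasing union of chambers, analysing how each new chamber is glued to the previously built subcomplex along a controlled subspace. This is the content of section~\ref{basic:section}, and is where I expect the main difficulty of the entire argument to reside.
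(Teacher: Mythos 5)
Your proposal is correct and follows essentially the same approach as the paper: it reduces the link condition to the global $(n-1)$-connectivity via an isomorphism $\lk_{\C^n}(C)\cong\C^{n-p-1}$ (using the same three algebraic propositions), and then proves the global connectivity by identifying $|\sd\C^n|$ with the basic construction $\U(W_n,|\Delta|)$ and exhausting it by chambers. The one cosmetic difference is that you frame the link reduction as an induction on $n$, whereas the paper simply proves the global connectivity for every $n$ independently and then deduces the link condition directly from the link isomorphism.
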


The proof of this theorem is assembled from Propositions~\ref{link},
\ref{homeo} and~\ref{basic-connectivity},
which are proved over the course of the next three sections.

\begin{proof}
By Proposition~\ref{link}, if $C$ is a $p$-simplex of $\C^n$
then $\lk_{\C^n}(C)\cong\C^{n-p-1}$.  It therefore suffices to
show that $\C^n$ is $(n-1)$-connected for all $n$,
or equivalently that the barycentric subdivision $\sd\C^n$
is $(n-1)$-connected for all $n$.  Now Proposition~\ref{homeo}
shows that $|\sd\C^n|$ is homeomorphic to the basic construction
$\U(W_n,|\Delta|)$, while Proposition~\ref{basic-connectivity}
shows that $\U(W_n,|\Delta|)$ is $(n-1)$-connected.
This completes the proof.
\end{proof}

\section{Links of simplices of $\C^n$}
\label{links:section}

\begin{proposition}\label{link}
Let $C$ be a $p$-simplex of $\C^n$.
Then $\lk_{\C^n}(C)\cong \C^{n-p-1}$.
\end{proposition}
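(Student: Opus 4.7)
The plan is to exhibit an explicit simplicial isomorphism $\psi \colon \C^{n-p-1} \to \lk_{\C^n}(C)$. Since the $W_n$-action on $\C^n$ takes a simplex with lift $c$ to one with lift $wc$ and so preserves the link structure, I may replace $C$ by $c^{-1}\cdot C$ and assume without loss of generality that $C$ has lift $e$, namely
\[
C = \{\,s_{n-p+1}\cdots s_n W_{n-1},\ \ldots,\ s_n W_{n-1},\ W_{n-1}\,\}.
\]
On vertices I would define $\psi$ by $cW_{n-p-2} \mapsto c\,s_{n-p}\cdots s_n W_{n-1}$ for $c \in W_{n-p-1}$. The intuition is that appending the $p+1$ vertices of $C$ to the image of a simplex of $\C^{n-p-1}$ with lift $c \in W_{n-p-1}$ should recover the simplex of $\C^n$ with the same lift $c$.

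For well-definedness and bijectivity on vertices I would compute when two elements $c_1,c_2 \in W_{n-p-1}$ produce equal cosets $c_i\,s_{n-p}\cdots s_n W_{n-1}$; by Proposition~\ref{intersection} with $i=n-p$ this happens precisely when $c_2^{-1}c_1 \in W_{n-p-2}$, giving both well-definedness and injectivity. For surjectivity, any vertex $v$ of $\lk_{\C^n}(C)$ gives a $(p+1)$-simplex $\{v\}\cup C$ of $\C^n$; choosing a lift that places $C$ in the last $p+1$ positions forces the lift to lie in $W_{n-p-1}$ by Proposition~\ref{tuple-stabilizer} with $i=n-p+1$, and this lift exhibits $v$ as the image under $\psi$ of a vertex of $\C^{n-p-1}$.

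To check that $\psi$ is simplicial, a $q$-simplex of $\C^{n-p-1}$ with lift $c \in W_{n-p-1}$ is sent under $\psi$ to $\{c(s_{n-p-q}\cdots s_n)W_{n-1},\ldots, c(s_{n-p}\cdots s_n)W_{n-1}\}$, and combining with $C$ yields exactly the $(p+q+1)$-simplex of $\C^n$ with lift $c$. The key computation is that $c \in W_{n-p-1}$ stabilises each coset $s_j\cdots s_n W_{n-1}$ for $j\geq n-p+1$, which follows from $W_{n-p-1} \subseteq W_{j-2}$ together with the inclusion $W_{j-2} \subseteq s_j\cdots s_n W_{n-1}s_n\cdots s_j$ given by Proposition~\ref{intersection}. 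The reverse implication, that a $q$-simplex of $\lk_{\C^n}(C)$ pulls back to a $q$-simplex of $\C^{n-p-1}$, proceeds analogously by choosing a lift of the combined $(p+q+1)$-simplex that places $C$ in its last $p+1$ positions and reading off the preimage. The principal obstacle is purely bookkeeping: one must keep straight which of Propositions~\ref{intersection} and~\ref{tuple-stabilizer} to invoke, and with which value of $i$, at each step. The underlying content is carried by the single identity $W_{n-p-1} \cap s_{n-p}\cdots s_n W_{n-1}s_n\cdots s_{n-p} = W_{n-p-2}$, which is doing essentially all of the work.
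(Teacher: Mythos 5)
Your proposal is correct and follows essentially the same route as the paper: the same map $dW_{n-p-2}\mapsto cd\,s_{n-p}\cdots s_n W_{n-1}$, with injectivity from Proposition~\ref{intersection} and surjectivity from Proposition~\ref{tuple-stabilizer}, the only cosmetic difference being your preliminary reduction to the case where $C$ has lift $e$. One small precision worth adding: in the surjectivity step, to apply Proposition~\ref{tuple-stabilizer} you need a lift of $\{v\}\cup C$ that places $C$ in the last $p+1$ positions \emph{in the ordering induced by $e$}, not merely as an unordered terminal block; since every ordering of a simplex arises from some lift, such a lift exists, but this should be said explicitly (the paper does so).
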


\begin{proof}
Choose a lift $c$ of $C$.
Define 
\[
	\phi \colon W_{n-p-1}/W_{n-p-2}\longrightarrow W_n/W_{n-1}
\]
by $\phi( d W_{n-p-2}) = cd s_{n-p}\cdots s_n W_{n-1}$
for $d\in W_{n-p-1}$.
This is well defined since every 
generator of $W_{n-p-2}$ commutes
with $s_{n-p},\ldots,s_n$.  
Observe that the domain and range of $\phi$
are the vertex sets of $\C^{n-p-1}$ and $\C^n$ respectively.

\begin{claim}
$\phi$ is an injection.
\end{claim}

To prove this claim let $d,d'\in W_{n-p-1}$ satisfy
\[
	cd(s_{n-k}\cdots s_n) W_{n-1}=
	cd' s_{n-k}\cdots s_n W_{n-1}.
\]
Then
\[
	d^{-1}d'
	\in W_{n-p-1}\cap (s_{n-p}\cdots s_n) W_{n-1}(s_n\cdots s_{n-p})
	=
	W_{n-p-2},
\]
the latter equation by Proposition~\ref{intersection}.
Thus $d' W_{n-p-2}=d W_{n-p-2}$.

\begin{claim}
	$\phi$ sends simplices of $\C^{n-p-1}$ to
	simplices of $\lk_{\C^n}(C)$;
\end{claim}

To prove this, suppose 
that $D$ is an $i$-simplex of $\C^{n-p-1}$.
Let $d\in W_{n-p-1}$ be a lift of $D$.
Then
\[
	\phi D = 
	\{ cds_{n-p-i}\cdots s_n W_{n-1},
	\ldots, cds_{n-p}\cdots s_n W_{n-1}\}
\]
while
\begin{align*}
	C&= \{cs_{n-p+1}\cdots s_nW_{n-1},\ldots,cs_n W_{n-1},cW_{n-1}\}
	\\
	&= \{cds_{n-p+1}\cdots s_nW_{n-1},\ldots,cds_n W_{n-1},cdW_{n-1}\}.
\end{align*}
Thus $\phi D\cap C=\emptyset$ by Proposition~\ref{distinct}, and 
$\phi D \cup C$ is a simplex of $\C^n$ with lift $cd$,
so that $\phi D$ is a simplex of $\lk_{\C^n}(C)$ as claimed.

\begin{claim}
	Every simplex
	of $\lk_{\C^n}(C)$ has the form $\phi D$
	for some simplex $D$ of $\C^{n-p-1}$.
\end{claim}

To prove this, suppose that 
$\bar D$ is an $i$-simplex of $\lk_{\C^n}(C)$.
Then $\bar D\cap C = \emptyset$ and 
$\bar D \cup C$ is a simplex of $\C^n$.
Let $c'$ be a lift of $\bar D\cup C$,
and assume without loss that the ordering it
induces on $\bar D\cup C$
contains $\bar D$ as an initial segment and $C$ as a terminal
segment with the ordering induced by $c$.
Thus
\[
	\bar D
	=
	\{c'(s_{n-p-i}\cdots s_n) W_{n-1},
	\ldots, c'(s_{n-p}\cdots s_n)W_{n-1}\}
\]
and
\[
	c'(s_{n-p+j}\cdots s_n)W_{n-1}=c(s_{n-p+j}\cdots s_n)W_{n-1}
\]
for $j=1,\ldots,p+1$.
The latter gives $c^{-1}c'\in W_{n-p-1}$ by
Proposition~\ref{tuple-stabilizer},
so that $c'=cd$ for some $d\in W_{n-p-1}$.
Then $\bar D = \phi D$, where $D$ is the $i$-simplex of $\C^{n-p-1}$
with lift $d$.

We can now prove the proposition.
Combining the first claim with the third in the case of $0$-simplices,
we see that $\phi$ is an isomorphism between the vertex
sets of $\C^{n-p-1}$ and $\lk_{\C^n}(C)$.
The second and third claims then show that $\phi$ induces
an isomorphism of simplicial complexes from $\C^{n-p-1}$
to $\lk_{\C^n}(C)$.
\end{proof}

\section{The action of $W_n$ on $\C^n$ and $\sd\C^n$}
\label{action:section}

Recall that if $X$ is an abstract simplicial complex,
then its \emph{barycentric subdivision} $\sd X$ is the
abstract simplicial complex whose vertices are the
simplices of $X$, and in which $\{C_0,\ldots,C_k\}$
is a simplex if, after possibly reordering the elements,
we have $C_0\subset\cdots\subset C_k$.

Here we will study the barycentric subdivision $\sd\C^n$.
The action of $W_n$ on $\C^n$ induces an action of $W_n$
on $\sd\C^n$.  
This new action automatically has the property
that if $w\in W_n$ and $\mathbf{C}$ is a simplex of $\sd\C^n$,
then $w$ fixes every vertex of $\mathbf{C}\cap w\mathbf{C}$.
(See~p.115 of~\cite{Bredon}.)
In particular any element of the stabilizer of $\mathbf{C}$
must fix $\mathbf{C}$ pointwise.

\begin{lemma}
\label{transitivity-original}
The action of $W_n$ on $\C^n$ is transitive
on the set of $k$-simplices for each $k$.
For a $k$-simplex $C$ of $\C^n$,
any permutation of the vertices of $C$
is realised by some element of $W_n$ that fixes $C$.
Every simplex of $\C^n$ is a face of an
$n$-simplex.
\end{lemma}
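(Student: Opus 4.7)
My plan is to deduce all three assertions by reducing to the ``standard'' $k$-simplex
\[
    C^{(k)}=\{s_{n-k+1}\cdots s_n W_{n-1},\ \ldots,\ s_n W_{n-1},\ W_{n-1}\},
\]
which is the $k$-simplex with lift equal to the identity of $W_n$. The key observation is that for any $c\in W_n$ the $k$-simplex with lift $c$ is exactly $c\cdot C^{(k)}$, and by Definition~\ref{Cndefinition} every $k$-simplex of $\C^n$ arises in this way. This immediately yields the transitivity statement, since the lift $c$ carries the orbit representative $C^{(k)}$ to any prescribed $k$-simplex.

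For the vertex-permutation statement I would apply Proposition~\ref{permutations}: for $n-k+1\leqslant i\leqslant n$, left multiplication by $s_i$ preserves the set $C^{(k)}$ and swaps the two vertices $s_i\cdots s_n W_{n-1}$ and $s_{i+1}\cdots s_n W_{n-1}$ while fixing the remaining vertices of $C^{(k)}$. These are precisely the adjacent transpositions in the natural ordering of the $(k+1)$-element vertex set of $C^{(k)}$, and hence generate the full symmetric group on that vertex set. For an arbitrary $k$-simplex $C$ with lift $c$, conjugating the appropriate $s_i$'s by $c$ produces, for every permutation of the vertices of $C$, an element of $W_n$ that fixes $C$ setwise and realises that permutation. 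For the third assertion, observe that if $c$ is a lift of $C$ then $C=c\cdot C^{(k)}$ is contained in $c\cdot C^{(n)}$, which is by definition the $n$-simplex with lift $c$; hence $C$ is a face of an $n$-simplex.

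I do not expect any real obstacle here: all three claims are essentially immediate consequences of the coset combinatorics recorded in Propositions~\ref{permutations} and~\ref{distinct}, together with the lift description in Definition~\ref{Cndefinition}. The only point needing care is the identification of which generators $s_i$ preserve $C^{(k)}$ and the observation that the ones that do act on $C^{(k)}$ as the adjacent transpositions on the natural ordering of its vertices.
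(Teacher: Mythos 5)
Your proof is correct and takes essentially the same approach as the paper: the paper likewise argues via lifts (transitivity via $dc^{-1}$, the third claim via the $n$-simplex with lift $c$), and its argument for the permutation claim rests on the same observation drawn from Proposition~\ref{permutations} — that multiplying a lift on the right by the relevant $s_i$ transposes two adjacent vertices in the induced ordering, and hence all orderings of a simplex arise from lifts. Your version makes the generation-by-adjacent-transpositions step more explicit, but the underlying reasoning is identical.
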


\begin{proof}
For the first claim, let $C$ and $D$ be 
$k$-simplices of $\C^n$ with respective lifts
$c$ and $d$.  Then $(d c^{-1})C=D$.
For the second claim, let $C$ be a simplex
of $\C^n$ with lift $c$, and let $\phi$
be a permutation of the vertices of $C$.
Let $c'$ be the lift of $C$ whose
induced ordering is obtained from the ordering
induced by $c$ by applying $\phi$.
Then $c'c^{-1}$ sends each vertex
of $C$ to its image under $\phi$.
The final claim follows because a
$k$-simplex with lift $c$
is a face of the $n$-simplex with lift $c$.
\end{proof}

\begin{lemma}
\label{transitivity-subdivision}
$W_n$ acts transitively on the $n$-simplices of $\sd\C^n$.
Every simplex of $\sd\C^n$ is a face of an $n$-simplex.
\end{lemma}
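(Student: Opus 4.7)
My plan is to deduce both statements directly from Lemma~\ref{transitivity-original}, exploiting the fact that a simplex of $\sd\C^n$ is a chain $C_0\subsetneq C_1\subsetneq\cdots\subsetneq C_k$ of simplices of $\C^n$, and that an $n$-simplex of $\sd\C^n$ is precisely such a chain with $\dim C_i = i$ for each $i$ (in particular $C_n$ is an $n$-simplex of $\C^n$ and the chain amounts to a full ordering of its vertices).

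To prove the transitivity statement, I would take two $n$-simplices of $\sd\C^n$, represented by maximal chains $C_0\subsetneq\cdots\subsetneq C_n$ and $D_0\subsetneq\cdots\subsetneq D_n$ in $\C^n$. The first part of Lemma~\ref{transitivity-original} gives $w_1\in W_n$ with $w_1 C_n=D_n$, so after replacing each $C_i$ by $w_1 C_i$ I may assume $C_n = D_n$. Both chains are now maximal chains in the single $n$-simplex $C_n$, and each is determined by the corresponding ordering of the $(n+1)$ vertices of $C_n$: namely, the ordering in which the unique new vertex added at each step appears. There is then a unique permutation $\phi$ of the vertices of $C_n$ sending the first ordering to the second, and the second part of Lemma~\ref{transitivity-original} produces $w_2\in W_n$ fixing $C_n$ setwise and realising $\phi$ on its vertices. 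Then $w_2w_1$ carries the first chain to the second.

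To prove the second statement, I would start with an arbitrary simplex $C_0\subsetneq\cdots\subsetneq C_k$ of $\sd\C^n$ and extend it to a maximal chain in two stages. First, the final part of Lemma~\ref{transitivity-original} allows me to enlarge $C_k$ to an $n$-simplex $C_n'$ of $\C^n$; any chain of simplices going from $C_k$ up to $C_n'$ one vertex at a time then fills the gap above $C_k$. Second, I fill each gap $C_{i-1}\subsetneq C_i$ below by inserting intermediate faces of $C_i$ obtained by adjoining the vertices of $C_i\setminus C_{i-1}$ one at a time in any chosen order. The resulting chain has one simplex of each dimension from $0$ to $n$ and contains the original chain as a subchain, so it is an $n$-simplex of $\sd\C^n$ having the given simplex as a face.

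Neither step poses a real obstacle: the first is a clean two-step application of Lemma~\ref{transitivity-original} (move the top simplex, then permute vertices), and the second is a purely combinatorial padding of a chain. The only point that needs a moment's care is recognising that a maximal chain in a fixed $n$-simplex is the same data as an ordering of its vertices, which is what makes the ``permute the vertices'' part of Lemma~\ref{transitivity-original} exactly the right tool.
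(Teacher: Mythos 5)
Your proof is correct and takes essentially the same approach as the paper's: both identify $n$-simplices of $\sd\C^n$ with $n$-simplices of $\C^n$ equipped with vertex orderings (equivalently, maximal chains) and then invoke Lemma~\ref{transitivity-original} for the transitivity and for extending an arbitrary chain to a maximal one. The paper states this more compactly, while you spell out the chain-to-ordering correspondence and the chain-padding step explicitly, but the underlying argument is the same.
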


\begin{proof}
Since $\C^n$ has dimension $n$,
the set of $n$-simplices of $\sd\C^n$
is in natural bijection with the set of $n$-simplices
of $\C^n$ equipped with an ordering of their vertices.
Lemma~\ref{transitivity-original} shows that the action
of $W_n$ on this set is transitive, proving the first claim.
The same lemma shows that every simplex of $\C^n$ is
a face of an $n$-simplex, and the second claim
follows immediately.
\end{proof}

To understand the action of $W_n$ on $\sd\C^n$
we may now concentrate on a single $n$-simplex.

\begin{definition}\label{Delta:definition}
Let $\Delta=\{a_0,\ldots,a_n\}$
denote the $n$-simplex of $\sd\C^n$ with vertices
$a_i=\{(s_{i+1}\cdots s_n)W_{n-1},\ldots,s_nW_{n-1},eW_{n-1}\}$
for $i=1,\ldots,n$.
For each $s\in S_n$ define a face $\Delta_s$ of $\Delta$
as follows.  If $s\in S_i\setminus S_{i-1}$ for $i=1,\ldots,n$ then
\[
	\Delta_{s}=\{a_0,\ldots,\widehat{a_i},\ldots,a_n\},
\]
and if $s\in S_{-1}$ then $\Delta_s=\Delta$.
\end{definition}

\begin{example}[The case $n=2$]
Let $n=2$.  The diagram on the left shows the
$2$-simplex $\{s_1s_2W_1,s_2W_1,W_1\}$ of $\C^2$ 
with lift the identity element, and the diagram
on the right shows $\Delta$ as a face of the subdivision
of this simplex.
\[
\begin{tikzpicture}[scale=0.3, baseline=0]
	\path[draw, line width=1, fill=black!10!white]
		 (90:5) -- (210:5) -- (330:5) -- cycle;
	\node[above] at (90:5) {$\scriptstyle W_1$};
	\node[below] at (210:5) {$\scriptstyle s_1s_2W_1$};
	\node[below] at (330:5) {$\scriptstyle s_2 W_1$};
\end{tikzpicture}
\hspace{70 pt}
\begin{tikzpicture}[scale=0.3,baseline=0]
	\path[draw, line width=0.5]
		 (90:5) -- (210:5) -- (330:5) -- cycle;
	\path[draw, line width=1, line join=round, fill=black!10!white]
		 (90:0) -- (90:5) -- (210:-5/2) -- cycle;
	\node[above] at (90:5) {$\scriptstyle a_2$};
	\node[right] at (210:-5/2) {$\scriptstyle a_1$};
	\node[below] at (90:0) {$\scriptstyle a_0$};
\end{tikzpicture}
\]
The next diagrams show $\Delta_{s_2}$, $\Delta_{s_1}$,
$\Delta_t$  and $\Delta_u$
for $t\in S_0\setminus S_{-1}$, and $u\in S_{-1}$.
\[
\begin{tikzpicture}[scale=0.5,baseline=0]
	\path[draw, line width=0.5, line join=round,dashed]
		 (90:0) -- (90:5) -- (210:-5/2) -- cycle;
	\path[draw, line width=1, line join=round]
		 (210:-5/2) --(90:0);
	\node[right] at (210:-5/2) {$\scriptstyle a_1$};
	\node[below] at (90:0) {$\scriptstyle a_0$};
	\node at (1.6,0) {$\Delta_{s_2}$};
\end{tikzpicture}
\hspace{60 pt}
\begin{tikzpicture}[scale=0.5,baseline=0]
	\path[draw, line width=0.5, line join=round,dashed]
		 (90:0) -- (90:5) -- (210:-5/2) -- cycle;
	\path[draw, line width=1, line join=round]
		 (90:0) -- (90:5);
	\node[above] at (90:5) {$\scriptstyle a_2$};
	\node[below] at (90:0) {$\scriptstyle a_0$};
	\node at (-0.9,2) {$\Delta_{s_1}$};
\end{tikzpicture}
\hspace{60 pt}
\begin{tikzpicture}[scale=0.5,baseline=0]
	\path[draw, line width=0.5, line join=round,dashed]
		 (90:0) -- (90:5) -- (210:-5/2) -- cycle;
	\path[draw, line width=1, line join=round]
		 (90:5) -- (210:-5/2);
	\node[above] at (90:5) {$\scriptstyle a_2$};
	\node[right] at (210:-5/2) {$\scriptstyle a_1$};
	\node at (1.9,3.3) {$\Delta_{t}$};
\end{tikzpicture}
\hspace{60 pt}
\begin{tikzpicture}[scale=0.5,baseline=0]
	\path[draw, line width=1, line join=round, fill=black!10!white]
		 (90:0) -- (90:5) -- (210:-5/2) -- cycle;
	\node[above] at (90:5) {$\scriptstyle a_2$};
	\node[right] at (210:-5/2) {$\scriptstyle a_1$};
	\node[below] at (90:0) {$\scriptstyle a_0$};
	\node at (0.9,2) {$\Delta_{u}$};
\end{tikzpicture}
\]
Observe that $s_2$ acts on $\{s_1s_2W_1,s_2W_1,W_1\}$
as the reflection that fixes
$s_1s_2W_1$ and interchanges $W_1$ and $s_2W_1$,
and $\Delta_{s_2}$ is exactly the part of $\Delta$
fixed by this reflection. 
Similarly, $s_1$ acts as the reflection that fixes
$W_1$ and interchanges $s_1s_2W_1$ and $s_2W_1$, and
$\Delta_{s_1}$ is precisely the part of $\Delta$
fixed by this reflection.
Next, $t$ can be thought of as 
``reflection in the edge $\{W_1,s_2W_1\}$'',
and $\Delta_t$ is precisely the part of $\Delta$ that it fixes.
Finally, $u$ acts trivially on $\Delta$ since it lies
in $W_1$ and commutes with $s_1$ and $s_2$, and indeed
$\Delta_u=\Delta$ is the part of $\Delta$ fixed by $u$.
\end{example}

\begin{lemma}
\label{stabilizer}
Let $F$ be a face of $\Delta$.
Then the stabilizer of $F$ under the 
action of $W_n$ is the subgroup generated 
by those $s\in S_n$ for which $F\subset \Delta_s$.
\end{lemma}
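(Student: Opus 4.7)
The plan is to compute the stabiliser of each vertex $a_i$ of $\Delta$ individually, then deduce the stabiliser of an arbitrary face $F$ by intersecting, using the standard fact from Coxeter theory that for subsets $U_k\subseteq S$ one has $\bigcap_k\langle U_k\rangle=\langle\bigcap_k U_k\rangle$.

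For the vertex calculation, I would argue that an element $w\in W_n$ stabilises the vertex $a_i$ of $\sd\C^n$ if and only if $w$ is a lift of $a_i$ viewed as a simplex of $\C^n$. Using Lemma~\ref{transitivity-original} together with Proposition~\ref{permutations}, every ordering of the $n-i+1$ vertices of $a_i$ is realised by an element of the parabolic subgroup $\langle s_{i+1},\ldots,s_n\rangle\cong\Sigma_{n-i+1}$, while by Proposition~\ref{tuple-stabilizer} the lifts inducing the ordering from the preferred lift $e$ are precisely the elements of $W_{i-1}$. A cardinality count then shows that
\[
\mathrm{Stab}_{W_n}(a_i)=\langle s_{i+1},\ldots,s_n\rangle\cdot W_{i-1}.
\]
Because the generator sets $\{s_{i+1},\ldots,s_n\}$ and $S_{i-1}$ sit in disjoint, mutually unconnected parts of $\Gamma_n$, the two factors commute elementwise and intersect trivially; hence the product equals the parabolic subgroup $\langle S_{i-1}\cup\{s_{i+1},\ldots,s_n\}\rangle$. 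Comparing with Definition~\ref{Delta:definition}, this parabolic equals $\langle\{s\in S_n:a_i\in\Delta_s\}\rangle$, both when $i\geqslant 1$ (the missing generator is $s_i$) and when $i=0$ (the missing generators are those of $S_0\setminus S_{-1}$).

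The remark preceding the lemma then gives $\mathrm{Stab}(F)=\bigcap_{a_i\in F}\mathrm{Stab}(a_i)$, since the action of $W_n$ on $\sd\C^n$ fixes any stabilised simplex pointwise. Applying the intersection-of-parabolics fact yields
\[
\mathrm{Stab}(F)=\bigl\langle\textstyle\bigcap_{a_i\in F}\{s\in S_n:a_i\in\Delta_s\}\bigr\rangle=\langle\{s\in S_n:F\subset\Delta_s\}\rangle,
\]
which is the statement of the lemma.

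The main obstacle is the vertex calculation. The containment $\langle s_{i+1},\ldots,s_n\rangle\cdot W_{i-1}\subseteq\mathrm{Stab}(a_i)$ is straightforward, but the reverse containment requires a counting argument using Proposition~\ref{tuple-stabilizer} to show that no lifts outside this product can exist; moreover, the case $i=0$ has to be treated separately because there is no ``generator $s_0$''---its role is played collectively by the set $S_0\setminus S_{-1}$, and one must check that the resulting formula still matches the definition of $\Delta_s$ in that case.
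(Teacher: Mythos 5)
Your proof is correct and takes essentially the same route as the paper: compute the stabiliser of each vertex $a_i$ via Propositions~\ref{permutations} and~\ref{tuple-stabilizer}, then intersect using the pointwise-fixing remark together with the standard fact (Theorem~4.1.6 of~\cite{Davis}) that an intersection of standard parabolic subgroups is the standard parabolic on the intersection of the generating sets. The paper writes the vertex stabiliser directly as $\langle S_n\setminus(S_i\setminus S_{i-1})\rangle$ rather than as the product $\langle s_{i+1},\ldots,s_n\rangle\cdot W_{i-1}$, but these coincide, and your separate treatment of the case $i=0$ is exactly what the paper's notation $S_{=0}$ is designed to absorb.
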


\begin{proof}
For the purposes of this proof, given $i\geqslant 0$
we write $S_{=i}$ for the difference $S_i\setminus S_{i-1}$.
So for $i\geqslant 1$ we have $S_{=i}=\{s_i\}$, while $S_{=0}$
is the set of elements of $S_0$ that do not commute with $s_1$.

For $i\geqslant 0$, the stabilizer of $a_i$ is the subgroup
of $W_n$ generated by $S_n\setminus S_{=i}$.  
For Proposition~\ref{permutations} shows that
$s_{i+1},\ldots,s_n$ fix 
$a_i=\{s_{i+1}\cdots s_nW_{n-1},\ldots,s_nW_{n-1},W_{n-1}\}$
and generate all permutations of its elements,
while Proposition~\ref{tuple-stabilizer} shows 
that the subgroup consisting of elements that fix
every element of $a_i$ is $W_{i-1}$.

Let $F=\{a_{i_1},\ldots,a_{i_r}\}$.
Then the stabilizer of $F$ is the intersection
of the stabilizers of the $a_{i_j}$.  By the last
paragraph this is the intersection of the subgroups
generated by the sets $S_n\setminus S_{=i_j}$,
and by a general result (see Theorem~4.1.6 of~\cite{Davis}) 
this is the subgroup generated by 
$\bigcap (S_n\setminus S_{=i_j}) = S_n\setminus\bigcup S_{=i_j}$.

Now $F\subset \Delta_s$ for all $s\in S_{-1}$, and $F\subset \Delta_s$
for $s\in S_{=i}$ if and only if $a_i\not\in F$.
Thus the set of $s$ such that $F\subset\Delta_s$
is $S\setminus\bigcup S_{=i_j}$.
Thus the subgroup generated by the $s$ such
that $F\subset\Delta_s$ is precisely the
pointwise stabilizer of $F$.
\end{proof}

\section{The barycentric subdivision of $\C^n$ and the basic construction}
\label{basic:section}

The `basic construction' is a method for building
and studying certain spaces with group action.
It can be used, for example, to study the topology
of the Coxeter complex and Davis complex of a Coxeter group.
In this section we will show that $|\sd\C^n|$ is an
instance of the basic construction, 
and we will use this to show that it is $(n-1)$-connected.

To begin we recall the relevant notions from 
section 5.1 of \cite{Davis}.
(These are tailored to the case of Coxeter groups. For an 
approach to the basic construction that applies to more
general groups see chapter~II.12 of~\cite{BridsonHaefliger}.)
Let $(W,S)$ be a Coxeter system.
A \emph{mirrored space} over $S$ is a space $X$
together with subspaces $X_s\subset X$, called \emph{mirrors},
one for each $s\in S$.
We assume that $X$ is a CW-complex and that the mirrors are subcomplexes.
The \emph{basic construction} is the space
\[
	\U(W,X) = (W\times X)/\sim
\]
where $(v,x)\sim (w,y)$ if and only if $x=y$ and $v^{-1}w$
belongs to the subgroup generated by the $s\in S$
for which $x\in X_s$.  The basic construction is equipped with the action
of $W$ by left translation, and we identify $X$ with
the image of $\{e\}\times X$ in $\U(W,X)$.
Observe that $\U(W,X)$ has the structure of a CW-complex
in which each translate $wX$ is a subcomplex.

For us the most important feature of the basic
construction is that it can be described as an increasing
union of chambers, meaning copies of $X$, 
as we now recall from section~8.1 of~\cite{Davis}.  
Given $w\in W$, let $\In(w)=\{s\in S\mid \ell(ws)<\ell(w)\}$
denote the set of letters with which a reduced expression
for $w$ can end, and let $X^{\In(w)}=\bigcup_{s\in\In(w)}X_s$
denote the corresponding union of mirrors.
Now order the elements of $W$ as $w_0,w_1,w_2,\ldots$
where $w_0=e$ and $\ell(w_{m})\leqslant \ell(w_{m+1})$ 
for $m\geqslant 0$. Define
\[
	P_m=\bigcup_{i=0}^m w_iX,
\]
so that $\U(W,X)$ is the increasing union of the subcomplexes $P_m$.
Then
\[
	P_m=P_{m-1}\cup w_mX
	\qquad\text{and}\qquad
	P_{m-1}\cap w_mX = w_m X^{\In(w_m)}.
\]
The latter equation is by Lemma~8.1.1 of~\cite{Davis},
and it will be useful to us since it specifies exactly
how each chamber is attached to its predecessory.

Now let us show that $|\sd\C^n|$ is an instance of the
basic construction.  Fix the Coxeter system $(W_n,S_n)$.
Recall the simplex $\Delta$ and the faces $\Delta_s$
introduced in Definition~\ref{Delta:definition}.
We make $|\Delta|$ into a mirrored space over $S_n$
by defining the mirror $|\Delta|_s$ to be
the subspace $|\Delta_s|$ of $|\Delta|$. 
We may therefore form the basic construction $\U(W_n,|\Delta|)$.
The inclusion $|\Delta|\hookrightarrow |\sd\C^n|$
sends $|\Delta|_s$ into the subset fixed by $s$,
and so extends uniquely to a $W_n$-equivariant map
$\U(W_n,|\Delta|)\to|\sd\C^n|$.

\begin{proposition}
\label{homeo}
The map $\U(W_n,|\Delta|)\to|\sd\C^n|$ is a
homeomorphism.
\end{proposition}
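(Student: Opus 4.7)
The plan is to show that the continuous $W_n$-equivariant map is a bijection, and then to upgrade this to a homeomorphism using CW structures on both sides. Surjectivity is immediate: by Lemma~\ref{transitivity-subdivision} every point of $|\sd\C^n|$ lies in some $n$-simplex of $\sd\C^n$, and every such $n$-simplex is a $W_n$-translate of $|\Delta|$.

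For injectivity, suppose $vx=wy$ in $|\sd\C^n|$ with $v,w\in W_n$ and $x,y\in|\Delta|$, and set $g=v^{-1}w$. I need to show that $gy=x$ forces $x=y$ and $g\in\langle s\in S_n:x\in|\Delta_s|\rangle$. Let $F$ and $F'$ denote the open faces of $\Delta$ containing $y$ and $x$ respectively; since $W_n$ acts simplicially on $\sd\C^n$, the element $g$ carries $F$ onto $F'$ and induces a bijection between their vertex sets, both of which are subsets of $\{a_0,\ldots,a_n\}$. The key observation is that $a_i$ has cardinality $n-i+1$, so the $a_i$ are simplices of $\C^n$ of pairwise distinct dimensions. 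Because $g$ preserves the dimension of simplices of $\C^n$, the induced bijection on vertices must be index-preserving; hence $F=F'$ and $g$ fixes every vertex of $F$ as a simplex of $\C^n$. The general fact about actions on barycentric subdivisions (recorded before Lemma~\ref{transitivity-original}) then implies $g$ fixes $F$ pointwise, so $gy=y$ and $x=y$. Lemma~\ref{stabilizer} now places $g$ in $\langle s:F\subset\Delta_s\rangle$, and since $x$ is interior to $F$ and each $\Delta_s$ is a subcomplex of $\Delta$, this coincides with $\langle s:x\in|\Delta_s|\rangle$ as required.

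To promote the bijection to a homeomorphism, note that both $\U(W_n,|\Delta|)$ and $|\sd\C^n|$ carry CW structures whose top-dimensional cells are the chambers $w|\Delta|$: for $\U(W_n,|\Delta|)$ this is the chamber decomposition described in the text preceding the proposition, and for $|\sd\C^n|$ it follows from Lemma~\ref{transitivity-subdivision}, which identifies the $n$-simplices of $\sd\C^n$ with the $W_n$-translates of $|\Delta|$. The bijection is cellular and restricts on each chamber to the tautological homeomorphism (a continuous bijection from the compact space $|\Delta|$ onto a Hausdorff image), so since both sides carry the weak topology with respect to their chambers it is a homeomorphism. The main obstacle in this argument is the index-preservation step used to deduce $F=F'$: it relies crucially on the carefully arranged fact that the vertices $a_0,\ldots,a_n$ of $\Delta$ are simplices of $\C^n$ of pairwise distinct dimensions, without which $g$ could in principle interchange two different faces of $\Delta$ of the same dimension.
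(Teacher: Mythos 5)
Your proof is correct and follows essentially the same route as the paper: surjectivity from Lemma~\ref{transitivity-subdivision}, injectivity via the stabilizer computation of Lemma~\ref{stabilizer}, and the weak-topology comparison for the homeomorphism step. The one small refinement is that you make the strictness step $x=y$ fully explicit using the observation that $a_0,\ldots,a_n$ have pairwise distinct dimensions as simplices of $\C^n$, whereas the paper leaves this to the general fact about simplicial actions on barycentric subdivisions recalled before Lemma~\ref{transitivity-original}.
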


\begin{proof}
The map is surjective because any point of $|\sd\C^n|$
is in a translate of $|\Delta|$.  This follows from
Lemma~\ref{transitivity-subdivision}, which shows
that every simplex of $\sd\C^n$ is a face
of a translate of $\Delta$.

The map is injective because if $x\in |\Delta|$,
then the image of $x$ in $\U(W_n,|\Delta|)$ 
has stabilizer generated by those $s$ for which $x\in|\Delta|_s$.
To see this, let $F$ denote the unique face 
of $\Delta$ for which
$x$ lies in the interior of $|F|$.
Then $x\in|\Delta|_s=|\Delta_s|$ if and only
if $F\subset\Delta_s$, and the stabilizer of $x$
is precisely the stabilizer of $F$.
The result then follows from
Lemma~\ref{stabilizer}.

The map is a homeomorphism because $|\sd\C^n|$
has the weak topology with respect to the realizations
of its simplices.  By Lemma~\ref{transitivity-subdivision}
this coincides with the weak topology with respect
to the realizations of its $n$-simplices.
This is exactly the topology of $\U(W_n,|\Delta|)$.
\end{proof}

\begin{proposition}\label{basic-connectivity}
$\U(W_n,|\Delta|)$ is $(n-1)$-connected.
\end{proposition}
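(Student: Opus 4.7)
The plan is to exploit the expression of $\U(W_n,|\Delta|)$ as the increasing union of chambers $P_m = \bigcup_{i=0}^{m} w_i|\Delta|$ recalled immediately above, and to prove by induction on $m$ the stronger statement that each $P_m$ is $(n-1)$-connected. The base case $P_0 = |\Delta|$ is an $n$-simplex and is therefore contractible. Granted the inductive claim, the conclusion for $\U(W_n,|\Delta|)$ follows by a standard compactness argument: any map $S^k \to \U(W_n,|\Delta|)$ with $k \leqslant n-1$ has compact image, hence factors through some $P_m$, where it admits a null-homotopy.

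For the inductive step, I would use the pushout description $P_m = P_{m-1} \cup_{A} w_m|\Delta|$, where the attaching subcomplex
\[
A := w_m|\Delta|^{\In(w_m)} = \bigcup_{s \in \In(w_m)} w_m|\Delta_s|
\]
is identified by the equation $P_{m-1} \cap w_m|\Delta| = w_m|\Delta|^{\In(w_m)}$ recalled just before the proposition. The key is to analyse $A$ using Definition~\ref{Delta:definition}: $\Delta_s = \Delta$ when $s \in S_{-1}$, and $\Delta_s$ is the codimension-one face of $\Delta$ opposite $a_i$ when $s \in S_i \setminus S_{i-1}$ with $i \geqslant 0$. This leads naturally to a three-way case split.

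First, if $\In(w_m)$ meets $S_{-1}$, then $A = w_m|\Delta|$, so $w_m|\Delta| \subset P_{m-1}$ and hence $P_m = P_{m-1}$. Otherwise $\In(w_m) \subset S_n \setminus S_{-1}$, and $A$ is a union of codimension-one faces of $w_m|\Delta|$ indexed by the set $I := \{i \in \{0,\ldots,n\} : \In(w_m) \cap (S_i \setminus S_{i-1}) \neq \emptyset\}$. If $I \neq \{0,\ldots,n\}$, choose any $j \notin I$; every face appearing in $A$ then contains $a_j$, so $A$ is a cone with apex $w_m a_j$ and is contractible. Attaching the contractible space $w_m|\Delta|$ along the contractible subcomplex $A$ is a homotopy equivalence by the gluing lemma in the CW category, so $P_{m-1} \simeq P_m$. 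Finally, if $I = \{0,\ldots,n\}$, then $A = w_m \partial|\Delta| \cong S^{n-1}$, so that $P_m$ is obtained from $P_{m-1}$ by attaching an $n$-cell, which preserves $(n-1)$-connectivity.

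The crux, and the only nontrivial part of the argument, is the combinatorial trichotomy above: one must see clearly how the structure of $\In(w_m)$ controls the union $\bigcup_{s \in \In(w_m)} \Delta_s$, and in particular how a missing index $j \notin I$ forces a cone apex $a_j$. Once this geometric picture is in hand, the homotopical conclusions in each of the three cases are routine and the induction goes through for all $m$ and all $n \geqslant 1$.
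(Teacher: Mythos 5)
Your proof is correct, and it follows the paper's overall structure exactly: express $\U(W_n,|\Delta|)$ as the increasing union of chambers $P_m$, prove by induction that each $P_m$ is $(n-1)$-connected, and reduce the inductive step to understanding the attaching subcomplex $A = w_m|\Delta|^{\In(w_m)}$. Where you diverge is in how the inductive step is concluded. The paper proves a uniform connectivity statement (Lemma~\ref{in-connectivity}: $|\Delta|^{\In(w)}$ is $(n-2)$-connected for $w\neq e$, by the same trichotomy you describe) and then appeals to a homotopy-excision lemma (Lemma~\ref{excision}, essentially Blakers--Massey) to promote the $(n-2)$-connectivity of the intersection and the $(n-1)$-connectivity of the two pieces to $(n-1)$-connectivity of the union. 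You instead draw a sharper topological conclusion in each branch of the trichotomy: either $P_m = P_{m-1}$, or the inclusion $P_{m-1}\hookrightarrow P_m$ is a homotopy equivalence (contractible subcomplex glued to a contractible chamber), or $P_m$ is $P_{m-1}$ with an $n$-cell attached. This is entirely correct --- your observation that a missing index $j\notin I$ makes $A$ a cone on $a_j$ is right, and so is the cell-attachment reading when $A=\partial|\Delta|$ --- and it buys you a more elementary argument that never invokes homotopy excision, at the cost of doing the homotopical bookkeeping separately in each case rather than once.
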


This relies on the following two lemmas.

\begin{lemma}\label{in-connectivity}
For $w\in W_n$, $w\neq e$, the space
$|\Delta|^{\In(w)}$ is $(n-2)$-connected.
\end{lemma}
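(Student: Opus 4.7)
The plan is to determine $|\Delta|^{\In(w)}$ explicitly from Definition~\ref{Delta:definition} and then observe that in every case it is either contractible or homeomorphic to $S^{n-1}$, hence $(n-2)$-connected. The only geometric input is the elementary fact that a codimension-one face of a simplex is convex.

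First I would read off $\Delta_s$ from the definition. Writing $S_{=i} = S_i \setminus S_{i-1}$ as in the proof of Lemma~\ref{stabilizer}, every $s \in S_n \setminus S_{-1}$ lies in a unique $S_{=i}$ with $i \in \{0,1,\ldots,n\}$, and $\Delta_s$ is then the codimension-one face of $\Delta$ opposite the vertex $a_i$; whereas if $s \in S_{-1}$ then $\Delta_s = \Delta$ itself. Consequently $|\Delta|^{\In(w)}$ is completely determined by the index set $I = \{i \in \{0,\ldots,n\} : \In(w) \cap S_{=i} \neq \emptyset\}$ together with whether $\In(w)$ meets $S_{-1}$.

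I would then split into three cases. If $\In(w) \cap S_{-1} \neq \emptyset$, some $|\Delta_s|$ is already all of $|\Delta|$, so $|\Delta|^{\In(w)} = |\Delta|$ is contractible. If $\In(w) \cap S_{-1} = \emptyset$ but $I \subsetneq \{0,\ldots,n\}$, pick any $j \in \{0,\ldots,n\} \setminus I$; then every face $|\Delta_s|$ appearing in the union is opposite some $a_i$ with $i \neq j$, and so contains $a_j$. Since each such face is convex, the straight-line homotopy $h_t(x) = (1-t)x + t a_j$ keeps each point $x \in |\Delta_s|$ inside $|\Delta_s|$ throughout, showing that $|\Delta|^{\In(w)}$ is star-shaped at $a_j$ and hence contractible. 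In the remaining case $I = \{0,\ldots,n\}$ (still with $\In(w) \cap S_{-1} = \emptyset$), the union is exactly the set of codimension-one faces of $\Delta$, namely $\partial |\Delta| \cong S^{n-1}$, which is $(n-2)$-connected. Because $w \neq e$ forces $\In(w) \neq \emptyset$, one of these three cases always applies.

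I do not foresee a real obstacle: the whole argument is a short case analysis driven by the geometry of the faces of an $n$-simplex. The one point that requires care is handling the mirrors indexed by $S_{-1}$ as a separate case, since for such $s$ the mirror $\Delta_s$ equals all of $\Delta$ rather than a proper face; this is also precisely why the hypothesis $w \neq e$, rather than a stronger condition on $\In(w)$, already suffices.
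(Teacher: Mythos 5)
Your proposal is correct and takes essentially the same approach as the paper's proof: both observe that $|\Delta|^{\In(w)}$ is either all of $|\Delta|$, a proper nonempty union of facets, or $\partial|\Delta| \cong S^{n-1}$, and is $(n-2)$-connected in each case. The only difference is that you supply an explicit star-shaped/straight-line homotopy justification for the middle case, which the paper leaves implicit.
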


\begin{proof}
The set $\In(w)$ is nonempty since $w\neq e$.
Thus $|\Delta|^{\In(w)}$ is either $|\Delta|$,
or it is a nonempty union of facets of $|\Delta|$.
In the first case it is contractible,
and in the second case it is either contractible (if not all facets
are in the union) or it is $\partial|\Delta|\cong S^{n-1}$
(if all facets are in the union).
In all cases it is $(n-2)$-connected.
\end{proof}

\begin{lemma}\label{excision}
Let $n\geqslant 1$.
Suppose that $(X;A,B)$ is a CW-triad in which
$A$ and $B$ are $(n-1)$-connected and $C=A\cap B$
is $(n-2)$-connected.  Then $X$ is $(n-1)$-connected.
\end{lemma}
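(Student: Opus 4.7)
The plan is to combine the Mayer--Vietoris sequence, the Seifert--van Kampen theorem, and the Hurewicz theorem in the standard way, splitting off the edge case $n=1$ where the conclusion is just path-connectivity.

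If $n=1$, then $A$ and $B$ are path-connected and $C=A\cap B$ is non-empty (which is what $(-1)$-connectedness amounts to). Any two points of $X=A\cup B$ can then be joined by a path that routes through a point of $C$, so $X$ is path-connected and we are done. Suppose from now on that $n\geqslant 2$. Then $A$ and $B$ are simply connected and $C$ is non-empty and path-connected, so the Seifert--van Kampen theorem applied to the CW-triad $(X;A,B)$ gives
\[
	\pi_1(X)\cong \pi_1(A)\ast_{\pi_1(C)}\pi_1(B)=1.
\]

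To treat the higher homotopy groups I would first compute reduced homology using the Mayer--Vietoris sequence
\[
	\tilde H_k(A)\oplus\tilde H_k(B)\longrightarrow
	\tilde H_k(X)\longrightarrow \tilde H_{k-1}(C)\longrightarrow
	\tilde H_{k-1}(A)\oplus \tilde H_{k-1}(B)
\]
in the range $1\leqslant k\leqslant n-1$. The outer terms involving $A$ and $B$ vanish because $A,B$ are $(n-1)$-connected, and $\tilde H_{k-1}(C)=0$ because $k-1\leqslant n-2$ and $C$ is $(n-2)$-connected. Consequently $\tilde H_k(X)=0$ for $0\leqslant k\leqslant n-1$, the case $k=0$ being path-connectivity of $X$. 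Since $X$ is simply connected with vanishing reduced homology through degree $n-1$, the Hurewicz theorem yields $\pi_k(X)=0$ for $k\leqslant n-1$, which is the desired $(n-1)$-connectivity.

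There is no serious obstacle in this argument; it is a textbook computation. The only point requiring care is the degree bookkeeping: the hypothesis that $C$ is one degree less connected than $A$ and $B$ is precisely what is needed for every term flanking $\tilde H_k(X)$ in the Mayer--Vietoris sequence to vanish in the relevant range, and for $\pi_1(C)\to \pi_1(A)$ and $\pi_1(C)\to \pi_1(B)$ to make sense in the Seifert--van Kampen step.
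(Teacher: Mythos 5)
Your proof is correct, but it takes a genuinely different route from the paper's. The paper invokes homotopy excision (the Blakers--Massey theorem, Theorem~4.23 of Hatcher): it first checks that the pairs $(A,C)$ and $(B,C)$ are $(n-1)$-connected via their long exact sequences, then uses homotopy excision to conclude that $\pi_i(A,C)\to\pi_i(X,B)$ is an isomorphism for $i<2n-2$, hence that $(X,B)$ and then $X$ are $(n-1)$-connected. You instead assemble the result from three more classical ingredients: Seifert--van Kampen to kill $\pi_1(X)$, Mayer--Vietoris to kill $\tilde H_k(X)$ through degree $n-1$, and the Hurewicz theorem to pass from homology to homotopy. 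Both arguments are valid and of comparable length; the trade-off is that the paper's version is a single appeal to a somewhat heavier theorem (and stays entirely in homotopy theory), whereas yours uses only the most standard tools but requires the detour through homology and the simple-connectivity hypothesis needed for Hurewicz. One small bookkeeping remark on your version: the hypotheses are exactly tuned so that path-connectivity of $C$ (needed for van Kampen) kicks in precisely when $n\geqslant 2$, which you handle correctly by splitting off $n=1$.
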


\begin{proof}
For $n=1$ this is immediate since the union
of two path-connected spaces with nonempty intersection
is path-connected.  So we assume that $n\geqslant 2$.
The pairs $(A,C)$ and $(B,C)$ are $(n-1)$-connected,
and $C$ is path-connected,
so that Theorem~4.23 of~\cite{Hatcher} can be applied to  
show that 
$\pi_i(A,C)\to\pi_i(X,B)$ is an isomorphism
for $i<2n-2$, and in particular for $i\leqslant (n-1)$.
Thus $(X,B)$ is $(n-1)$-connected,
and the same then follows for $X$ itself.
\end{proof}

\begin{proof}[Proof of Proposition~\ref{basic-connectivity}]
If $n=0$ then the claim is that $\U(W_n,|\Delta|)$
is nonempty, which holds vacuously.
So we may assume that $n\geqslant 1$.

As in the discussion at the start of the section,
order the elements of $W_n$ as $w_0,w_1,w_2,\ldots$
starting with the identity and respecting the length.
Then $\U(W_n,|\Delta|)$ is the union of subcomplexes
$P_0\subset P_1\subset P_2\subset\cdots$
where $P_0=|\Delta|$ and
\[
	P_m=P_{m-1}\cup w_m |\Delta|
	\qquad\text{with}\qquad
	P_{m-1}\cap w_m|\Delta| = w_m|\Delta|^{\In(w_m)}.
\]
It will suffice to show that each $P_m$ is $(n-1)$-connected.  
We do this by induction on $m$.

In the initial case $m=0$ we have $P_0=e|\Delta|$,
which is contractible and so the claim holds.
For the induction step we take $m\geqslant 1$
and assume that $P_{m-1}$ is $(n-1)$-connected.
Then $P_m=P_{m-1}\cup w_m|\Delta|$ 
is the union of the subcomplexes $P_{m-1}$ and $w_m|\Delta|$,
and their intersection $w_m|\Delta|^{\In(w_m)}$
is $(n-2)$-connected by Lemma~\ref{in-connectivity}.
Thus $(P_m;P_{m-1},w_m|\Delta|)$
is a CW-triad in which the subspaces
$P_{m-1}$ and $w_m|\Delta|$ are $(n-1)$-connected
and their intersection  
is $(n-2)$-connected.
It now follows from Lemma~\ref{excision}
that $P_m$ is $(n-1)$-connected as required.
\end{proof}

\section{The ordered simplices of $\C^n$}
\label{ordered:section}
In this section we introduce a semisimplicial set
$\D^n$ and identify it as the semisimplicial set
of ordered simplices in $\C^n$.  We then use the
fact that $\C^n$ is weakly Cohen-Macaulay of dimension $n$
to deduce that the geometric realisation $\|\D^n\|$ is
$(n-1)$-connected, an approach we learned from
Wahl's paper~\cite{WahlAutomorphism} (see in particular
Proposition~7.9 of~\cite{WahlAutomorphism}, 
which is due to Randal-Williams).
In this section and the next we will use semisimplicial spaces
and their realisations.  The background material we require
can be found in section~2 of~\cite{RW}.

\begin{definition}
Let $\D^n$ denote the semisimplicial set 
with $k$-simplices.
\[
	\D^n_k = \left\{
	\begin{array}{ll}
		W_n/W_{n-k-1}	& k\leqslant n
		\\
		\emptyset & k>n
	\end{array}
	\right.
\]
and with face maps
\[
	d_i\colon W_n/W_{n-k-1}
	\longrightarrow
	W_n/W_{n-k}
\]
defined by
\[
	d_i(cW_{n-k-1})
	=
	c(s_{n-k+i}\cdots s_{n-k+1})W_{n-k}
\]
for $i=0,\ldots,k$.
\end{definition}

It is a simple exercise to verify that the face maps
$d_i$ satisfy the relations $d_i\circ d_j = d_{j+1}\circ d_{i}$
for $i<j$.  Alternatively, it is a consequence
of the proof of Proposition~\ref{CnDn} below.

\begin{definition}
Let $X$ be a simplicial complex.
By an \emph{ordered simplex of $X$},
we mean a simplex of $X$ equipped with an
ordering of its vertices.
The \emph{semi-simplicial set 
of ordered simplices in $X$}, denoted $X^\ord$,
has for its $k$-simplices the ordered $k$-simplices
in $X$, with face maps $d_i$ given by forgetting
the $i$-th vertex of an ordered simplex.
\end{definition}

\begin{proposition}\label{CnDn}
$\D^n$ is isomorphic to $\C^{n,\ord}$.
\end{proposition}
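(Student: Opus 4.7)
The plan is to construct an explicit isomorphism of semisimplicial sets $\phi\colon \D^n\to\C^{n,\ord}$ by sending each coset $cW_{n-k-1}\in\D^n_k$ to the ordered $k$-simplex of $\C^n$ with lift $c$ (in the sense of Definition~\ref{Cndefinition}), namely
\[
(c s_{n-k+1}\cdots s_nW_{n-1},\ c s_{n-k+2}\cdots s_nW_{n-1},\ \ldots,\ c s_nW_{n-1},\ cW_{n-1}).
\]

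First I would show that $\phi$ is well-defined on $W_n/W_{n-k-1}$. If $w\in W_{n-k-1}$ then its generators lie in $S_0\cup\{s_1,\ldots,s_{n-k-1}\}$, and by the construction of the diagrams $\Gamma_n$ each such generator commutes with every element of $\{s_{n-k+1},\ldots,s_n\}$. Hence $w$ commutes with every product $s_{n-k+j+1}\cdots s_n$, and since $w\in W_{n-k-1}\subset W_{n-1}$ we obtain $cw\cdot s_{n-k+j+1}\cdots s_nW_{n-1}=c\cdot s_{n-k+j+1}\cdots s_n W_{n-1}$ for every $j$. Thus $\phi$ depends only on the coset $cW_{n-k-1}$.

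Next I would establish that $\phi$ is bijective in each degree. Surjectivity is immediate from the definition of lift. For injectivity, if $c$ and $c'$ are lifts of the same ordered $k$-simplex then the hypotheses of Proposition~\ref{tuple-stabilizer} are met with $i=n-k+1$, yielding $c^{-1}c'\in W_{n-k-1}$ and hence $cW_{n-k-1}=c'W_{n-k-1}$.

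The main obstacle is verifying that $\phi$ commutes with face operators. Unwinding the definitions, one must show that the ordered $(k-1)$-simplex with lift $c\cdot s_{n-k+i}\cdots s_{n-k+1}$ is precisely the one obtained by deleting the $i$-th vertex of the ordered $k$-simplex with lift $c$. A vertex-by-vertex comparison reduces this to the coset identities
\[
s_{n-k+i}\cdots s_{n-k+1}\cdot s_{n-k+j+2}\cdots s_nW_{n-1}
=
\begin{cases}
s_{n-k+j+1}\cdots s_n W_{n-1},& j<i,\\
s_{n-k+j+2}\cdots s_n W_{n-1},& j\geqslant i,
\end{cases}
\]
for $0\leqslant j\leqslant k-1$. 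When $j\geqslant i$, every index in the leftmost factor differs from every index in the middle factor by at least two, so those generators commute past each other, and the leftover factor $s_{n-k+i}\cdots s_{n-k+1}$ lies in $W_{n-1}$ (its indices being at most $n-k+i\leqslant n-1$). When $j<i$, one pushes non-adjacent generators past each other and invokes iterated applications of the braid relation $s_{\ell+1}s_\ell s_{\ell+1}=s_\ell s_{\ell+1}s_\ell$, absorbing lower-index leftovers into $W_{n-1}$. These manipulations are elementary, in the spirit of the identities of Proposition~\ref{permutations}, but mildly tedious.
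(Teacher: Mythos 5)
Your proof is correct and follows the same overall strategy as the paper's: you define the same map $\phi_k$ sending $cW_{n-k-1}$ to the ordered $k$-simplex with lift $c$, establish well-definedness by noting that generators of $W_{n-k-1}$ commute with $s_{n-k+1},\ldots,s_n$, use the existence and flexibility of lifts for surjectivity, and invoke Proposition~\ref{tuple-stabilizer} for injectivity. The only point of real departure is the verification that $\phi$ intertwines the face maps. You reduce this to the explicit coset identities
\[
s_{n-k+i}\cdots s_{n-k+1}\cdot s_{n-k+j+2}\cdots s_nW_{n-1}
=
\begin{cases}
s_{n-k+j+1}\cdots s_n W_{n-1},& j<i,\\
s_{n-k+j+2}\cdots s_n W_{n-1},& j\geqslant i,
\end{cases}
\]
and prove them directly by pushing generators past each other, using commutation relations and the braid relation. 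These identities are correct (for $j\geqslant i$ one needs $n-k+i\leqslant n-1$, which holds since $j\leqslant k-1$ forces $i\leqslant k-1$). The paper instead observes that in $\D^n$ one has $d_i(cW_{n-k-1})=d_{i-1}(cs_{n-k+i}W_{n-k-1})$ for $i\geqslant 1$, and that $\phi_k(cW_{n-k-1})$ and $\phi_k(cs_{n-k+i}W_{n-k-1})$ differ by transposing adjacent vertices (Proposition~\ref{permutations}), which gives an induction on $i$ reducing the whole check to the trivial $d_0$ case. The two methods are substantively equivalent --- both rest on the relations packaged in Proposition~\ref{permutations} --- with the paper's being a bit tidier and yours more hands-on, but yours is sound.
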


\begin{proof}
We define $\phi_k\colon \D^n_k\to \C^{n,\ord}_k$ by
\[
	\phi_k(cW_{n-k-1})
	=
	\{
		c (s_{n-k+1}\cdots s_n) W_{n-1},\ 
		\ldots,\ 
		c s_nW_{n-1},\ 
		c W_{n-1}	
	\}
\]
for $cW_{n-k-1}\in W_n/W_{n-k-1}$.
In other words, $\phi_k(cW_{n-k-1})$ is the $k$-simplex with lift $c$,
equipped with the ordering induced by $c$.
The map $\phi_k$ is well defined because the generators of $W_{n-k-1}$ all
commute with $s_{n-k+1},\ldots,s_n$.
It is surjective because by definition every simplex admits a lift,
and any ordering of a simplex is afforded by some lift
(see the paragraph following Definition~\ref{Cndefinition}).
It is injective because if $\phi_k(cW_{n-k-1})=\phi_k(c'W_{n-k-1})$
then $cs_i\cdots s_nW_{n-1}=c's_i\cdots s_nW_{n-1}$ for $i=n-k+1,\ldots,n+1$,
so that $cW_{n-k-1}=c'W_{n-k-1}$ by Proposition~\ref{tuple-stabilizer}.

To complete the proof we must show that the face maps
in $\C^{n,\ord}$ and $\D^n$ are compatible under the $\phi_k$.
In other words, given $0\leqslant i\leqslant k\leqslant n$,
we must show that  
\[
	\phi_{k-1}\circ d_i = d_i\circ\phi_k.
\]
Observe from the definition of $d_i$ in $\D^n$
that for $i\geqslant 1$ we have
$d_i(cW_{n-k-1}) = d_{i-1}(cs_{n-k+i}W_{n-k+1})$.
On the other hand, Proposition~\ref{permutations}
shows that $\phi_k(cW_{n-k-1})$ and $\phi_k(cs_{n-k+i}W_{n-k-1})$
differ only by the transposition of their $(i-1)$-st and $i$-th
vertices, so that 
$d_i(\phi_k(cW_{n-k-1}))=d_{i-1}(\phi_k(cs_{n-k+i}W_{n-k-1}))$.
Thus the claim will follow by induction on $i$
so long as we can show that
\[
	\phi_{k-1}\circ d_0 = d_0\circ\phi_k.
\]
This follows by inspection.
\end{proof}

\begin{corollary}\label{Dn}
$\|\D^n\|$ is $(n-1)$-connected.
\end{corollary}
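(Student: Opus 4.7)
The plan is a direct combination of the earlier results of the paper with a general principle from the literature. First I would invoke Proposition~\ref{CnDn}, which identifies \(\D^n\) with the semisimplicial set \(\C^{n,\ord}\) of ordered simplices of \(\C^n\). This reduces the corollary to showing that \(\|\C^{n,\ord}\|\) is \((n-1)\)-connected.

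Next I would appeal to the general principle, due to Randal-Williams and recorded as Proposition~7.9 of Wahl's paper~\cite{WahlAutomorphism}: for any simplicial complex \(X\) that is weakly Cohen-Macaulay of dimension \(n\), the geometric realisation \(\|X^\ord\|\) is \((n-1)\)-connected. Applying this with \(X = \C^n\), whose weakly Cohen-Macaulay property is exactly the content of Theorem~\ref{CohenMacaulay}, immediately yields the required connectivity of \(\|\D^n\|\).

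For the reader unfamiliar with the cited principle, it is typically proved by induction on \(n\) via a comparison between \(\|X^\ord\|\) and \(|X|\) through the forgetful map. Any map \(S^k \to \|X^\ord\|\) with \(k \leqslant n-1\) is pushed down to \(|X|\), filled using \((n-1)\)-connectivity of \(X\), and then lifted back; the obstructions to lifting factor through the realisations \(\|(\lk_X C)^\ord\|\) for simplices \(C\) of \(X\), and these are killed by the inductive hypothesis together with the Cohen-Macaulay bound on link connectivities.

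Consequently there is no real obstacle in the present corollary: all the substantive work has been carried out in Theorem~\ref{CohenMacaulay}, and the result here is a formal consequence of that theorem, Proposition~\ref{CnDn}, and Wahl's general principle.
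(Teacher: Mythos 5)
Your proposal is correct and follows exactly the paper's own argument: identify $\D^n$ with $\C^{n,\ord}$ via Proposition~\ref{CnDn}, then apply Proposition~7.9 of~\cite{WahlAutomorphism} to the weakly Cohen-Macaulay complex $\C^n$ from Theorem~\ref{CohenMacaulay}. The additional sketch you give of the cited principle is a reasonable gloss but not needed for the corollary itself.
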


\begin{proof}
Theorem~\ref{CohenMacaulay} shows that $\C^n$ is weakly
Cohen-Macaulay of dimension $n$.
Proposition~7.9 of~\cite{WahlAutomorphism} shows that
if a simplicial complex $X$ is weakly Cohen-Macaulay
of dimension $n$, then $\|X^\ord\|$ is $(n-1)$-connected.
Consequently $\|\C^{n,\ord}\|$ is $(n-1)$-connected,
and by Proposition~\ref{CnDn} the same holds for $\|\D^n\|$.
\end{proof}

\section{Completion of the proof}
\label{completion:section}

We now complete the proof of the main theorem.
This section is modelled closely on section~5
of~\cite{RW}, from which there is little essential difference.
It is also similar to the proof of Theorem~2 of~\cite{Kerz}.

We regard $\D^n$ as a simplicial space by equipping 
its constituent sets with the discrete topology.
Then we form a semisimplicial space
\[
	EW_n\times_{W_n}\D^n
\]
by setting
$(EW_n\times_{W_n}\D^n)_k = EW_n\times_{W_n}\D^n_k$
and using the face maps obtained from those of $\D^n$.

\begin{lemma}\label{augmentation}
The projection $EW_n\times_{W_n}\D^n_0\to BW_n$
makes $EW_n\times_{W_n}\D^n$ into an augmented simplicial
space over $BW_n$, and the induced map
$\|EW_n\times_{W_n}\D^n\|\to BW_n$ is $(n-1)$-connected.
\end{lemma}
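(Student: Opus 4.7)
The plan is to deduce the lemma from the connectivity of $\|\D^n\|$ established in Corollary~\ref{Dn}, via a standard Borel-construction argument.

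For the augmentation, I would start from the constant $W_n$-equivariant map $\D^n \to \{*\}$, where $*$ is the terminal semisimplicial set equipped with trivial $W_n$-action. Applying $EW_n \times_{W_n} (-)$ levelwise produces an augmented semisimplicial space whose augmentation target is $EW_n \times_{W_n} \{*\} = BW_n$; in degree zero this is precisely the projection $EW_n \times_{W_n} \D^n_0 \to BW_n$ named in the lemma, and the required identity $\epsilon \circ d_0 = \epsilon \circ d_1$ is inherited tautologically from the semisimplicial identities of $\D^n$.

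Next, I would invoke the standard compatibility of geometric realisation with the Borel construction. Because thick realisation commutes with products against a constant space and with quotients by a group action acting levelwise, there is a natural homeomorphism
\[
\|EW_n \times_{W_n} \D^n\| \;\cong\; EW_n \times_{W_n} \|\D^n\|,
\]
under which the induced augmentation is identified with the projection $EW_n \times_{W_n} \|\D^n\| \to EW_n/W_n = BW_n$ of the Borel construction onto its base.

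Finally, this projection is a fibre bundle with fibre $\|\D^n\|$. Since $\|\D^n\|$ is $(n-1)$-connected by Corollary~\ref{Dn}, the long exact sequence of homotopy groups of the fibration yields isomorphisms $\pi_i(EW_n \times_{W_n} \|\D^n\|) \to \pi_i(BW_n)$ for $i \leq n-1$ and a surjection in degree $n$, which is exactly the $(n-1)$-connectivity asserted. The only delicate point is the commutation of $\|\cdot\|$ with the Borel construction at the level of thick realisations of semisimplicial spaces; once that standard fact is verified, the fibration long exact sequence finishes the argument with no further input.
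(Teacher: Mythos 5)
Your proof is correct and follows essentially the same route as the paper: identify the augmentation as a fibre bundle over $BW_n$ with fibre $\|\D^n\|$ (the paper derives this directly from $EW_n\to BW_n$ being a principal $W_n$-bundle, which is the same commutation-of-realisation-with-Borel fact you make explicit), then apply the long exact sequence of a fibration together with Corollary~\ref{Dn}.
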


\begin{proof}
The composites of the projection with 
$d_0$ and $d_1$ coincide, so that the projection is indeed
an augmentation.
Since $EW_n\to BW_n$ is a locally trivial principal $W_n$-bundle,
it follows that $\|EW_n\times_{W_n}\D^n\|\to BW_n$ is a locally
trivial bundle with fibre $\|W_n\times_{W_n}\D^n\|\cong\|\D^n\|$,
which is $(n-1)$-connected by Corollary~\ref{Dn},
so that the map itself is $(n-1)$-connected.
\end{proof}

\begin{lemma}\label{columns}
There are homotopy equivalences 
$EW_n\times_{W_n}\D^n_k \simeq BW_{n-k-1}$
under which the face maps 
$d_i\colon EW_n\times_{W_n}\D^n_k
\longrightarrow EW_n\times_{W_n}\D^n_{k-1}$
are all homotopic to the stabilization map
$BW_{n-k-1}\to BW_{n-k}$, and under which
the composite
$EW_n\times_{W_n}\D^n_0\to\|EW_n\times_{W_n}\D^n\|\to BW_n$
is homotopic to the stabilization map
$BW_{n-1}\to BW_n$.
\end{lemma}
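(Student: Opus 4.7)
The plan is to exploit the canonical identification $EW_n\times_{W_n}(W_n/H)\cong EW_n/H$, which is a model for $BH$ whenever $H\leqslant W_n$, and then to reinterpret the face and augmentation maps in terms of group-theoretic data in $W_n$. Concretely, for each $k$ with $0\leqslant k\leqslant n$ I would take the chosen homotopy equivalence $EW_n\times_{W_n}\D^n_k\simeq BW_{n-k-1}$ to be the canonical homeomorphism
\[
	EW_n\times_{W_n}(W_n/W_{n-k-1})\ \cong\ EW_n/W_{n-k-1}.
\]

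The main work is the analysis of the face maps. Writing $g:=s_{n-k+i}\cdots s_{n-k+1}$ (with $g=e$ when $i=0$), I would factor $d_i$ as
\[
	W_n/W_{n-k-1}\ \xrightarrow{\ \cdot g\ }\ W_n/W_{n-k-1}\ \xrightarrow{\ \mathrm{proj}\ }\ W_n/W_{n-k},
\]
where the first map is right multiplication by $g$ and the second is the natural projection. Applying $EW_n\times_{W_n}(-)$, the projection induces on Borel constructions (identified as above) the classifying-space map of the inclusion $W_{n-k-1}\hookrightarrow W_{n-k}$, which is the stabilization map. The factor $\cdot g$ induces a self-map of $BW_{n-k-1}$ whose effect on $\pi_1=W_{n-k-1}$ is conjugation by $g$, up to an inner automorphism. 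The key diagrammatic check is that $g$ in fact \emph{centralizes} $W_{n-k-1}$: for $k\leqslant n-1$, the generators of $W_{n-k-1}$ lie in $S_0\cup\{s_1,\ldots,s_{n-k-1}\}$, and each factor $s_j$ of $g$ has $j\geqslant n-k+1\geqslant 2$, so $s_j$ commutes with every vertex of $S_0$ (whose only edge into $\{s_1,\ldots,s_n\}$ reaches $s_1$) and with each $s_\ell$ for $\ell\leqslant n-k-1$ (since $|j-\ell|\geqslant 2$); in the edge case $k=n$, the group $W_{-1}$ is generated by $S_{-1}$, which by construction contains no neighbor of $s_1$, so each $s_j$ commutes with it. Since conjugation by $g$ is therefore trivial, the induced self-map of $BW_{n-k-1}$ is homotopic to the identity, and hence $d_i$ is homotopic to the stabilization map.

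For the augmentation, Lemma~\ref{augmentation} identifies the composite $EW_n\times_{W_n}\D^n_0\to\|EW_n\times_{W_n}\D^n\|\to BW_n$ with the map obtained from the equivariant projection $W_n/W_{n-1}\to\mathrm{pt}$. Under our identifications this is the map $EW_n/W_{n-1}\to EW_n/W_n$, i.e.\ the classifying-space map of the inclusion $W_{n-1}\hookrightarrow W_n$, which is exactly the stabilization map. I expect the only real obstacle to be the diagrammatic verification that $g$ centralizes $W_{n-k-1}$; the remainder of the proof consists of unravelling the Borel construction together with the standard fact that a self-map of $BH$ is determined up to homotopy by its effect on $\pi_1=H$ modulo inner automorphisms.
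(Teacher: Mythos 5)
Your proposal is correct and follows essentially the same route as the paper: identify $EW_n\times_{W_n}\D^n_k$ with $EW_n/W_{n-k-1}$, observe that the twist by $g=s_{n-k+i}\cdots s_{n-k+1}$ is harmless because $g$ centralizes $W_{n-k-1}$, and conclude that each $d_i$ becomes the stabilization map. The only cosmetic difference is that you justify the harmlessness of the twist via the fact that unpointed self-maps of a $K(G,1)$ are classified by outer automorphisms of $\pi_1$, whereas the paper appeals directly to the uniqueness (up to equivariant homotopy) of maps between models of $EW_{n-k-1}$; your explicit diagrammatic check that $g$ commutes with $W_{n-k-1}$ is a welcome addition that the paper leaves to the reader.
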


\begin{proof}
There is an isomorphism
\[
	EW_n\times_{W_n}\D^n_k
	=
	EW_n\times_{W_n}(W_n/W_{n-k-1})
	\xrightarrow{\ \cong\ }
	EW_n/W_{n-k-1}
\]
sending the orbit of $(x,cW_{n-k-1})$
to the orbit of $c^{-1}x$.
This identifies $d_i$ with the map
\[
	EW_n/W_{n-k-1}\longrightarrow EW_n/W_{n-k}
\]
sending the $W_{n-k-1}$-orbit of $x$ 
to the $W_{n-k}$-orbit of $(s_{n-k+1}\cdots s_{n-k+i})x$.
Since $(s_{n-k+1}\cdots s_{n-k+i})$ commutes with every
element of $W_{n-k-1}$, this map is homotopic to the
one sending the $W_{n-k-1}$-orbit of $x$ to the 
$W_{n-k}$-orbit of $x$.
Now the equivariant homotopy equivalences
\[
	EW_{n-k-1}\to EW_n,
	\qquad\qquad
	EW_{n-k}\to EW_n
\]
induce homotopy equivalences
\[
	BW_{n-k-1}\to EW_n/W_{n-k-1},
	\qquad\qquad
	BW_{n-k}\to EW_n/W_{n-k}
\]
under which the map $EW_n/W_{n-k-1}\to EW_n/W_{n-k}$
just described becomes the stabilization map.
\end{proof}

The skeletal filtration of $\|EW_n\times_{W_n}\D^n\|$
leads to a first-quadrant spectral sequence
\[
	E^1_{k,l}=H_l(EW_n\times_{W_n}\D^n_k)
	\Longrightarrow
	H_{k+l}(\|EW_n\times_{W_n}D^n\|)
\]
in which the differential $d^1$ is given by the alternating
sum $\sum_{i=0}^k(-1)^i(d_i)_\ast$ of the maps induced
by the face maps.
Lemma~\ref{columns} allows us to identify the $E^1$-term
of this spectral sequence:
$E^1_{k,l}=H_l(BW_{n-k-1})$, 
and $d^1\colon E^1_{k,l}\to E^1_{k-1,l}$
is the stabilization map $H_l(BW_{n-k-1})\to H_l(BW_{n-k})$
if $k$ is even, and is zero if $k$ is odd.

\begin{lemma}\label{ss}
Assume that for all $m<n$ the stabilization map
$
	H_l(BW_{m-1})\to H_l(BW_m)
$
is an isomorphism in degrees $2l\leqslant m$.
Then the spectral sequence has the following properties:
\begin{enumerate}
	\item
	$E^\infty_{0,l}=\cdots = E^2_{0,l}=E^1_{0,l}$ for $2l\leqslant n$.
	\item
	$E^\infty_{k,l}=0$ for $k>0$ and $2(k+l)\leqslant n$.
\end{enumerate}
\end{lemma}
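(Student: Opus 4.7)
The plan is to exploit the alternating-sign structure of the $d^1$-differential together with the inductive hypothesis to compute the $E^2$-page, deduce (2) immediately from $E^2$-vanishing, and then handle (1) by showing that the higher differentials into the zeroth column vanish as well. By Lemma~\ref{columns} every face map $(d_i)_\ast$ is homotopic to the same stabilisation map $BW_{n-k-1}\to BW_{n-k}$, so $d^1 = \sum_{i=0}^k (-1)^i (d_i)_\ast$ collapses to the stabilisation map when $k$ is even and to zero when $k$ is odd. In particular, at $k=0$ both neighbouring $d^1$'s vanish, so $E^2_{0,l} = E^1_{0,l}$ unconditionally.

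Next I would compute the remaining columns of $E^2$. For even $k=2j \geqslant 2$ the outgoing stabilisation is an isomorphism by the hypothesis as soon as $2l \leqslant n-2j$, killing $E^2_{2j,l}$ whenever $2l + k \leqslant n$; for odd $k=2j-1 \geqslant 1$ the incoming stabilisation is an isomorphism under the same bound $2l \leqslant n-2j$, killing $E^2_{2j-1,l}$ whenever $2l + k + 1 \leqslant n$. A short arithmetic check shows both regimes are implied by $2(k+l) \leqslant n$ for $k \geqslant 1$, so $E^2_{k,l} = 0$ in this range, from which (2) follows because $E^\infty_{k,l}$ is a subquotient of $E^2_{k,l}$. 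For (1), the equality $E^2_{0,l} = E^1_{0,l}$ is automatic, so it remains to rule out the differentials $d^r\colon E^r_{r,\, l-r+1} \to E^r_{0,l}$ for $r \geqslant 2$, for which it suffices to show $E^2_{r,\, l-r+1} = 0$. Specialising the vanishing criterion to $(k, l') = (r, l-r+1)$ yields the bound $r \geqslant 2l - n + 2$ when $r$ is even and $r \geqslant 2l - n + 3$ when $r$ is odd; under the hypothesis $2l \leqslant n$ these reduce to $r \geqslant 2$ and $r \geqslant 3$ respectively, so every $r \geqslant 2$ of the appropriate parity qualifies.

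I expect the genuine obstacle to be this last verification at the boundary of the range: the critical groups $E^2_{2,\, l-1}$ (even) and $E^2_{3,\, l-2}$ (odd) both require exactly $2l \leqslant n$ for their vanishing, with no slack. A naive bound on $E^2$ using only total degree would yield (1) only for $2l \leqslant n - 2$, so one really has to exploit the parity-dependent sharpening of the vanishing range to cover the full range $2l \leqslant n$ that the main theorem needs.
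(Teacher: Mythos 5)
Your proof is correct and follows essentially the same route as the paper: compute the $E^2$-page via the parity-dependent vanishing bounds $2l+k\leqslant n$ (even $k\geqslant 2$) and $2l+k+1\leqslant n$ (odd $k\geqslant 1$), deduce (2) from total-degree arithmetic, and establish (1) by checking that the domains $E^2_{r,l-r+1}$ of the differentials hitting column $0$ vanish. Your observation that the boundary cases $E^2_{2,l-1}$ and $E^2_{3,l-2}$ are tight, and that the cruder criterion (2) alone would only give (1) for $2l\leqslant n-2$, is an accurate and useful remark on why the sharper parity-split bound is needed.
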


\begin{proof}
The assumption allows us to deduce that  
$E^2_{k,l}=0$ when $k\geqslant 1$ is odd and $2l+k+1\leqslant n$,
and that
$E^2_{k,l}=0$ when $k\geqslant 2$ is even and $2l+k\leqslant n$.
For in the first case $d^1\colon E^1_{k+1,l}\to E^1_{k,l}$
is the stabilization map $H_l(BW_{n-k-2})\to H_l(BW_{n-k-1})$,
and in the second case $d^1\colon E^1_{k,l}\to E^1_{k-1,l}$
is the stabilization map $H_l(BW_{n-k-1})\to H_l(BW_{n-k})$,
and our assumption means that both are isomorphisms in the given range.

To prove the first claim, observe that 
$E^2_{0,l}=E^1_{0,l}$ since $d^1\colon E^1_{1,l}\to E^1_{0,l}$
is zero.  The remaining differentials with target in
bidegree $(0,l)$ are $d^k\colon E^k_{k,l-k+1}\to E^k_{0,l}$
with $k\geqslant 2$, and these have domain zero since
$2(l-k+1)+k\leqslant 2(l-k+1)+k+1=2l-k+2\leqslant 2l\leqslant n$
so that $E^2_{k,l-k+1}=0$.
To prove the second claim, 
observe that if $2(k+l)\leqslant n$ and $k>0$,
then certainly $2l+k< 2l+k+1\leqslant 2(l+k)\leqslant n$,
so that $E^2_{k,l}=0$.
\end{proof}

We can now complete the proof of the main theorem,
showing by induction on $n\geqslant 2$ that
$H_l(BW_{n-1})\to H_l(BW_n)$ is an isomorphism
for $2l\leqslant n$.
For $n=2$ the claim is that $H_l(BW_{1})\to H_l(BW_2)$ is an
isomorphism for $l=0,1$.  For $l=0$ this is trivial 
since both $BW_1$ and $BW_2$ are connected.
For $l=1$ this follows from the well-known fact that
if $(W,S)$ is a Coxeter system then $H_1(BW)$ is isomorphic
to the elementary abelian $2$-group generated by the
elements of $S$, subject to the relation that identifies 
$s,t\in S$ if $m_{st}$ is odd.
Take $n>2$ and suppose that the theorem holds for all smaller
integers.  
Lemma~\ref{columns} shows that the composite
\[
	H_{l}(W_{n-1})=E^1_{0,l}
	\to
	E^\infty_{0,l}
	\to
	H_l(\|EW_n\times_{W_n}\D^n\|)
	\to
	H_l(BW_n)
\]
is the stabilization map, and we must show that this
is an isomorphism for $2l\leqslant n$.
Lemma~\ref{ss} shows that
the first two arrows are isomorphisms in this range, 
while Lemma~\ref{augmentation} shows that the last map
is an isomorphism for $l\leqslant n-1$, which 
holds since $2l\leqslant n$ and $n\geqslant 3$.

\bibliographystyle{plain}
\bibliography{coxeter}
\end{document}